\title{Towards A Deeper Geometric, Analytic and Algorithmic Understanding of Margins}
\author{
Aaditya Ramdas \\
Machine Learning Department\\
Carnegie Mellon University\\
\texttt{aramdas@cs.cmu.edu}
\and 
Javier Pe\~{n}a\\
Tepper School of Business\\
Carnegie Mellon University\\
\texttt{jfp@andrew.cmu.edu}
}
\newtheorem{theorem}{Theorem}
\newtheorem{proposition}{Proposition}
\newtheorem{corollary}{Corollary}
\newcommand{\half}{\tfrac1{2}}
\newcommand{\R}{\mathbb{R}}
\newcommand{\ones}{\mathbf{1}}
\newcommand{\zeros}{\mathbf{0}}
\begin{document}

\maketitle

\begin{abstract}
Given a matrix $A$, a linear feasibility problem (of which linear classification is a special case) aims to find a solution to a primal problem $w: A^Tw > \zeros$ or a certificate for the dual problem which is a probability distribution $p: Ap = \zeros$. Inspired by the continued importance of ``large-margin classifiers'' in machine learning, this paper studies a condition measure of $A$ called its \textit{margin} that determines the difficulty of both  the above problems.  
To aid geometrical intuition, we first establish new characterizations of the margin in terms of relevant balls, cones and hulls. Our second contribution is analytical, where we present generalizations of Gordan's theorem, and variants of Hoffman's theorems, both using margins. We end by proving some new results on a classical iterative scheme, the Perceptron, whose convergence rates famously depends on the margin. Our results are relevant for a deeper understanding of margin-based learning and proving convergence rates of iterative schemes, apart from providing a unifying perspective on this vast topic.
\end{abstract}


\section{Introduction}
Assume that we have a $d \times n$ matrix $A$ representing $n$ points $a_1,...,a_n$ in $\R^d$. In this paper, we will be concerned with linear feasibility problems that ask if there exists a vector $w \in \R^d$ that makes positive dot-product with every $a_i$, i.e.
\begin{equation}\label{eqP}
?\exists w ~:~ A^T w > \zeros, \tag{P}
\end{equation}
where boldfaced $\zeros$ is a vector of zeros. The corresponding algorithmic question is ``if (P) is feasible, how quickly can we find a 
$w$ that demonstrates (P)'s feasibility?''.

 Such problems abound in optimization as well as machine learning. For example, consider \textit{binary linear classification} - given $n$ points $x_i \in \R^d$ with labels $y_i \in \{+1,-1\}$, a classifier $w$ is said to separate the given points if $w^T x_i$ has the same sign as $y_i$ or succinctly $y_i (w^T x_i) > 0$ for all $i$. Representing $a_i = y_i x_i$ shows that this problem is a specific instance of (P).

 We call (P) the \textit{primal} problem, and (we will later see why) we define the \textit{dual} problem (D) as:
\begin{equation}\label{eqD}
?\exists p \geq \zeros ~:~ Ap = \zeros, p \ne \zeros, \tag{D}
\end{equation}
and the corresponding algorithmic question is ``if (D) is feasible, how quickly can we find a certificate $p$ that demonstrates feasibility of (D)?''.\\

Our aim is to deepen the geometric, algebraic and algorithmic understanding of the problems (P) and (D),  tied together by a concept called \textit{margin}. Geometrically, we provide intuition about ways to interpret margin in the primal and dual settings relating to various balls, cones and hulls. Analytically, we prove new margin-based versions of classical results in convex analysis like Gordan's and Hoffman's theorems. Algorithmically, we give new insights into the classical Perceptron algorithm. We begin with a gentle introduction to some of these concepts, before getting into the details.

\paragraph{\textbf{Notation}} 
When we write $v \geq w$ for vectors $v,w$, we mean $v_i \geq w_i$ for all their indices $i$ (similarly $v\leq w, v=w$). 
To distinguish surfaces and interiors of balls more obviously to the eye in mathematical equations, we choose to denote Euclidean balls in $\mathbb{R}^d$ by $\Circle := \{w \in \mathbb{R}^d : \|w\|=1\}$, $\CIRCLE:=\{w \in \mathbb{R}^d : \|w\|\leq 1\}$ and the probability simplex $\mathbb{R}^n$ by $\triangle:=\{p \in \mathbb{R}^n : p\geq \zeros, \|p\|_1 = 1\}$. 
 We denote the linear subspace spanned  by $A$ as lin$(A)$, and convex hull of $A$ by conv$(A)$.
Lastly, define $\CIRCLE_A := \CIRCLE \cap \mathrm{lin}(A)$  and $r\CIRCLE$ is the  ball of radius $r$ ($\Circle_A, r\Circle$ are similarly defined). 

\subsection{\textbf{Margin $\rho$}} The margin of the problem instance $A$ is classically defined as
\begin{eqnarray}
\rho &:=& \sup_{w \in \Circle} \inf_{p \in \triangle}\ w^T Ap \label{eq:margin}\\
&=& \sup_{w \in \Circle} \inf_i \ w^T a_i.\nonumber
\end{eqnarray}

If there is a $w$ such that $A^T w > \zeros$, then $\rho > 0$. If for all $w$, there is a point at an obtuse angle to it, then $\rho < 0$. At the boundary $\rho$ can be zero. The $w \in \Circle$ in the definition is important -- if it were $w \in \CIRCLE$, then $\rho$ would be non-negative, since $w=0$ would be allowed.

This definition of margin was introduced by Goffin~\cite{G80} who gave several geometric interpretations. It has since been extensively studied (for example, \cite{R94,R95} and \cite{CC01}) as a notion of complexity and conditioning of a problem instance. Broadly, the larger its magnitude, the better conditioned the pair of feasibility problems (P) and (D) are, and the easier it is to find a witnesses of their feasibility. Ever since \cite{V98}, the margin-based algorithms have been extremely popular with a growing literature in machine learning which it is not relevant to presently summarize.

In Sec.~\ref{sec:affmargin}, we define an important and ``corrected'' variant of the margin, which we call \textit{affine-margin}, that turns out to be the actual quantity determining convergence rates of iterative algorithms.

\paragraph{\textbf{Gordan's Theorem }} This is a classical \textit{theorem of the alternative}, see \cite{BL06,C83}. It implies that exactly one of (P) and (D) is feasible. 
Specifically, it states that exactly one of the following statements is true.
\begin{enumerate}
\item There exists a $w$ such that $A^T w > \zeros$.
\item There exists a $p \in \triangle$ such that $Ap = \zeros$.
\end{enumerate} 
This, and other separation theorems like Farkas' Lemma (see above references), are widely applied in algorithm design and analysis. 
We will later prove generalizations of Gordan's theorem  using affine-margins.

\paragraph{\textbf{Hoffman's Theorem}}  The classical version of the theorem from \cite{H52} characterizes how close a point is to the solution set of the feasibility problem $Ax\le b$ in terms of the amount of  violation in the inequalities and a problem dependent constant. In a nutshell, if
$\mathbb{S} ~:=~ \{x | Ax \leq b \} \neq \emptyset$ 
then
\begin{equation} \label{eq:hoffintro}
\mathrm{dist}(x,\mathbb{S}) ~\leq~ \tau \big\| [Ax-b]_+ \big\|
\end{equation}
where $\tau$ is the ``Hoffman constant'' and it depends on $A$ but is \textit{independent of $b$}. This and similar theorems have found extensive use in convergence analysis of algorithms - examples include \cite{GPS12,HL12,SP13}.

G\"uler, Hoffman, and Rothblum~\cite{GHR95} generalize this bound to any norms on the left and right hand sides of the above inequality. We will later prove theorems of a similar flavor for (P) and (D), where $\tau^{-1}$ will almost magically turn out to be the affine-margin. 
Such theorems are used for proving rates of convergence of algorithms, and having the constant explicitly in terms of a familiar quantity is useful.

\subsection{\textbf{Summary of Contributions}} 
\begin{itemize}
\item  \textbf{Geometric}: In Sec.\ref{sec:affmargin}, we define the \textit{affine-margin}, and argue why a subtle difference from Eq.(\ref{eq:margin}) makes it the ``right'' quantity to consider, especially for problem (D). 
We then establish geometrical characterizations of the affine-margin when (P) is feasible as well as when (D) is feasible
and connect it to well-known \textit{radius theorems}. This is the paper's appetizer.

\item \textbf{Analytic}: Using the preceding geometrical insights, in Sec.\ref{sec:gordan} we prove two generalizations of Gordan's Theorem to deal with alternatives involving the affine-margin when either (P) or (D) is strictly feasible. Building on this intuition further, in Sec.\ref{sec:hoffman}, we prove several interesting variants of Hoffman's Theorem, which explicitly involve the affine-margin when either (P) or (D) is strictly feasible. This is the paper's main course.

\item \textbf{Algorithmic}: In Sec.\ref{sec:NP}, we prove new properties of the Normalized Perceptron, like its margin-maximizing and margin-approximating property for (P) and dual convergence for (D). This is the paper's dessert.

\end{itemize}

We end with a historical discussion relating Von-Neumann's and Gilbert's algorithms, and their advantage over the Perceptron.

\section{From Margins to \textit{Affine}-Margins}\label{sec:affmargin}
An important but subtle point about margins is that the  quantity  determining the difficulty of solving (P) and (D) is actually \textit{not} the margin as defined classically in Eq.(\ref{eq:margin}), but the affine-margin which is the margin when $w$ is restricted to lin($A$), i.e. $w=A\alpha$ for some coefficient vector $\alpha \in \R^n$. The affine-margin is defined as
\begin{eqnarray}
\rho_A &:=& \sup_{w \in \Circle_A} \inf_{p \in \triangle}\ w^T Ap \nonumber \\
&=& \sup_{\|\alpha\|_G=1} \inf_{p \in \triangle} \alpha^T Gp \label{affmargin}
\end{eqnarray}
where $G=A^TA$ is a key quantity called the Gram matrix, and $\|\alpha\|_G = \sqrt{\alpha^T G \alpha}$ is easily seen to be a self-dual semi-norm. 

Intuitively, when the problem (P) is infeasible but  $A$ is not full rank, i.e. lin($A$) is not $\R^d$, then $\rho$ will never be negative  (it will always be zero), because one can always pick $w$ as a unit vector perpendicular to  lin$(A)$, leading to a zero dot-product with every $a_i$. Since no matter how easily inseparable $A$ is, the margin is always zero if $A$ is low rank, this definition does not capture the difficulty of verifying linear infeasibility.

Similarly, when the problem (P) is feasible, it is easy to see that searching for $w$ in directions perpendicular to $A$ is futile, and one can restrict attention to lin$(A)$, again making this the right quantity in some sense. For clarity, we will refer to 
\begin{equation}
\rho_A^+ ~:=~ \max\{0,\rho_A\} \mbox{\ \ ; \ \ } \rho_A^- ~:=~ \min\{0,\rho_A\}
\end{equation} 
when the problem (P) is strictly feasible ($\rho_A>0$) or strictly infeasible ($\rho_A<0$) respectively.

We remark that when $\rho > 0$, we have $\rho_A^+ =  \rho_A = \rho$, so the distinction really matters when $\rho_A < 0$, but it is still useful to make it explicit. One may think that if $A$ is not full rank, performing PCA would get rid of the unnecessary dimensions. However, we often wish to only perform elementary operations on (possibly large matrices) $A$ that are much simpler than eigenvector computations.

\subsubsection*{\textbf{Instability of $\rho_A^-$ compared to $\rho$}}
 Unfortunately, the behaviour of $\rho_A^-$ is quite finicky -- unlike $\rho_A^+$ it is not stable to small perturbations when conv($A$) is not full-dimensional. To be more specific, if (P) is strictly feasible and we perturb all the vectors by a small amount or add a vector that maintains feasibility, $\rho_A^+$ can only change by a small amount. However, if (P) is strictly \textit{in}feasible and we perturb all the vectors by a small amount or add a vector that maintains infeasibility, $\rho_A^-$ can change by a large amount. 
 
For example, assume lin$(A)$ is not full-dimensional, and $|\rho_A^-|$ is large. If we add a new vector $v$ to $A$ to form $A' = \{A \cup v\}$ where $v$ has a even a tiny component $v^\perp$ orthogonal to lin($A$), then $\rho_{A'}^-$ suddenly becomes zero. This is because it is now possible to choose a vector $w = v^\perp/\|v^\perp\|$ which is in lin$(A')$, and makes zero dot-product with $A$, and positive dot-product with $v$. Similarly, instead of adding a vector, if we perturb a given set of vectors so that lin($A$) increases dimension, the negative margin can suddenly jump from to zero. 

Despite its instability and lack of ``continuity'', it is indeed this negative affine margin that determines rate of convergence of algorithms for (D). 
In particular, the convergence rate of the von Neumann--Gilbert algorithm for (D) is determined by $\rho_A^-$ much the same way as the convergence rate of the perceptron algorithm for (P) is determined by $\rho_A^+$.  We discuss these issues in detail in Section~\ref{sec:NP} and Section~\ref{sec.VNG}.

\subsection{Geometric Interpretations of $\rho_A^+$}
\label{sec:rhoaplus}

The positive margin has many known geometric interpretations -- it is the width of the feasibility cone, and also the largest ball centered on the unit sphere that can fit inside the dual cone ($w : A^T w > \zeros$ is the dual cone of cone$(A)$) -- see, for example \cite{CC01, FV99}. Here, we provide a few more interpretations. Remember that $\rho_A^+ = \rho$ when Eq.\eqref{eqP} is feasible.

\begin{proposition}\label{margindual}
The distance of the origin to conv$(A)$ is $\rho^+_A$.
\begin{equation}\label{dist.origin}
\rho_A^+ ~=~ \inf_{p \in \triangle} \|p\|_G ~=~ \inf_{p \in \triangle} \|Ap\|
\end{equation}
\end{proposition}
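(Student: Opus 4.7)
The second equality, $\inf_{p\in\triangle} \|p\|_G = \inf_{p\in\triangle} \|Ap\|$, is immediate from the definition $\|p\|_G = \sqrt{p^T A^T A p} = \|Ap\|$, so the real content is showing $\rho_A^+ = \inf_{p\in\triangle} \|Ap\|$. My plan is to derive this by a minimax swap, after first replacing the sphere $\Circle_A$ by the closed ball $\CIRCLE_A$ in the definition of $\rho_A^+$.

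First, I would argue that
\[
\rho_A^+ \;=\; \sup_{w\in\CIRCLE_A}\; \inf_{p\in\triangle}\; w^T A p.
\]
Indeed, taking $w=\zeros\in\CIRCLE_A$ shows the right-hand side is $\geq 0$, so when $\rho_A\le 0$ both sides equal $0$. When $\rho_A>0$, the sup over the closed ball must be attained on the sphere (otherwise one could scale up a witness $w\ne\zeros$), so it agrees with $\rho_A=\rho_A^+$. This small bookkeeping step is the key that lets us apply a minimax theorem, since $\Circle_A$ is not convex but $\CIRCLE_A$ is.

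Next, I would apply Sion's minimax theorem to the bilinear function $(w,p)\mapsto w^T A p$ on the compact convex sets $\CIRCLE_A$ and $\triangle$ (linear, hence continuous, concave in $w$ and convex in $p$) to exchange the order:
\[
\rho_A^+ \;=\; \inf_{p\in\triangle}\; \sup_{w\in\CIRCLE_A}\; w^T A p.
\]
Finally, since $Ap \in \mathrm{lin}(A)$, the inner supremum of $w^T(Ap)$ over $w\in\CIRCLE_A = \CIRCLE\cap\mathrm{lin}(A)$ is achieved by $w = Ap/\|Ap\|$ when $Ap \ne \zeros$ (and is $0$ otherwise), yielding value $\|Ap\|$. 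Plugging back gives $\rho_A^+ = \inf_{p\in\triangle}\|Ap\|$, as desired.

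The only mild subtlety, and the step I would be most careful about, is the reduction from $\Circle_A$ to $\CIRCLE_A$: one must check both the case $\rho_A\le 0$ (where the origin achieves the supremum on $\CIRCLE_A$ and matches $\rho_A^+=0$) and the case $\rho_A>0$ (where the supremum is attained on the sphere). Everything else is routine: $\|p\|_G=\|Ap\|$ is a tautology, Sion's theorem applies directly to the bilinear form on the two compact convex sets, and the evaluation of $\sup_{w\in\CIRCLE_A} w^T(Ap)$ uses only that $Ap\in\mathrm{lin}(A)$ so no orthogonal component is lost.
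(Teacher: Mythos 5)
Your proof is correct and follows essentially the same route as the paper's: replace the sphere by the closed ball, swap $\sup$ and $\inf$ by a minimax theorem, and evaluate the inner supremum via (self-)duality of the Euclidean norm. The only difference is cosmetic: for $\rho_A\le 0$ the paper instead invokes feasibility of (D) to get $\inf_{p\in\triangle}\|Ap\|=0$, whereas you fold that case into the ball formulation --- just be sure to also record there that $\sup_{w\in\CIRCLE_A}\inf_{p\in\triangle}w^TAp\le 0$ (by positive homogeneity of $w\mapsto\inf_{p\in\triangle}w^TAp$), not only $\ge 0$.
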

\begin{proof} When $\rho_A \le 0$, $\rho_A^+ = 0$ and Eq.\eqref{dist.origin} holds because (D) is feasible making the right hand side also zero.  When $\rho_A > 0$, 
\begin{equation}\label{eq:dual}
\rho_A^+ = \sup_{w \in \Circle} \inf_{p \in \triangle} w^T A  p = \sup_{w \in \CIRCLE} \inf_{p \in \triangle} w^T A  p = \inf_{p \in \triangle} \sup_{w \in \CIRCLE} w^T A  p =  \inf_{p \in \triangle} \|Ap\|.
\end{equation}
Note that the first two equalities holds when $\rho_A > 0$, the next by the minimax theorem, and the last by self-duality of $\|.\|$. \end{proof}

The quantity $\rho_A^+$ is also closely related to a particular instance of the Minimum Enclosing Ball (MEB) problem. While it is  common knowledge that MEB is connected to margins (and support vector machines), it is possible to explicitly characterize this relationship, as we have done below.

\begin{proposition}\label{meb} Assume $A = \left[\begin{array}{ccc}a_1 & \cdots & a_n \end{array} \right] \in \R^{d\times n}$ and $\|a_i\|=1,\; i=1,\dots,d$.  Then
the radius of the minimum enclosing ball of conv($A$) is $ \sqrt{1 - \rho_A^{+2}}$.
\end{proposition}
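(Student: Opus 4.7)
My plan is to set up the minimum enclosing ball (MEB) problem as a saddle-point problem, swap min and max via a standard minimax argument, and recognize the resulting inner problem as exactly the quantity characterized in Proposition~\ref{margindual}.

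First, I would write the squared MEB radius as
\[
r^2 \;=\; \min_{c \in \R^d} \max_i \|a_i - c\|^2 \;=\; \min_{c \in \R^d} \max_{p \in \triangle} \sum_{i=1}^n p_i \|a_i - c\|^2,
\]
where the second equality uses the fact that a linear function over the simplex attains its maximum at a vertex. Expanding the square and using the hypothesis $\|a_i\| = 1$ for every $i$, the expression inside simplifies to $1 - 2\, c^T A p + \|c\|^2$.

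Next, the objective $(c,p) \mapsto 1 - 2c^TAp + \|c\|^2$ is convex in $c$ and linear (hence concave) in $p$, and $\triangle$ is convex and compact. Sion's minimax theorem therefore lets me swap min and max:
\[
r^2 \;=\; \max_{p \in \triangle} \min_{c \in \R^d} \bigl[ 1 - 2\, c^T A p + \|c\|^2 \bigr].
\]
The inner minimization over $c$ is an unconstrained quadratic with optimizer $c^\star = Ap$, giving value $1 - \|Ap\|^2$. Hence
\[
r^2 \;=\; \max_{p \in \triangle} \bigl[ 1 - \|Ap\|^2 \bigr] \;=\; 1 - \min_{p \in \triangle} \|Ap\|^2.
\]
Finally, by Proposition~\ref{margindual}, $\min_{p \in \triangle} \|Ap\| = \rho_A^+$, so $r^2 = 1 - (\rho_A^+)^2$, and taking square roots concludes the proof.

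The only mildly subtle step is the minimax swap (and confirming that the outer min over unconstrained $c$ is attained, which follows from coercivity of $\|c\|^2$ for any fixed $p$); everything else is bookkeeping built on Proposition~\ref{margindual}. It is worth noting that this argument covers both the feasible and infeasible cases of (P) uniformly: when (D) is feasible, $\rho_A^+ = 0$ and the formula correctly gives $r = 1$, consistent with the fact that the origin then lies in $\mathrm{conv}(A)$ and forces any enclosing ball of the unit-norm points $a_i$ to have radius at least one.
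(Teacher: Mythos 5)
Your proof is correct and takes essentially the same route as the paper: both reduce the MEB radius to the dual value $\max_{p\in\triangle}\bigl(1-\|Ap\|^2\bigr)$ and then invoke Proposition~\ref{margindual}. The only difference is that you derive this dual via a Sion-type minimax swap on $\min_c\max_i\|a_i-c\|^2$, whereas the paper cites Lagrangian duality of the constrained QCQP formulation (a step it leaves as ``a simple exercise''), so your write-up just fills in that exercise by a slightly different but equivalent path.
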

\begin{proof} 
It is a simple exercise to show that the following are the MEB problem, and its Lagrangian dual 
\begin{eqnarray}
\min_{c,r}& & r^2 \mbox{\ \ \ s.t. \ \ \ } \|c-a_i\|^2 \leq r^2 \notag
\\ 
\label{eq:MEB}
\\ \notag
\max_{p \in \triangle}& & \ 1 - \|Ap\|^2. \notag
\end{eqnarray}
The result then follows from  Proposition~\ref{margindual}. 
\end{proof}

As we show in Section~\ref{sec:NP}, the (Normalized) Perceptron and related algorithms that we introduce later yields a sequence of iterates that converge to the center of the MEB, and if the distance of the origin to conv($A$) is zero (because $\rho_A < 0$), then the sequence of iterates coverges to the origin, and the MEB just ends up being the unit ball.

\subsection{Geometric Interpretations of $|\rho_A^-|$}\label{sec:rhoaminus}

\begin{proposition}\label{thm:radthm} If $\rho_A \le 0$ then
$|\rho_A^-|$ is the radius of the largest Euclidean ball centered at the origin that completely fits inside the \textit{relative interior} of the convex hull of $A$. Mathematically,
\begin{eqnarray}\label{eq:rhoaminus}
|\rho_A^-| &=&\sup \Big \{ R \ \big| \|\alpha\|_G \leq R \Rightarrow A\alpha \in \mathrm{conv}(A) \Big \}.
\end{eqnarray}
\end{proposition}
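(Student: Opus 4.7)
The plan is to express both sides of the claimed identity in terms of the support function of $\mathrm{conv}(A)$ viewed as a convex body inside $\mathrm{lin}(A)$, and then match them using a sign-flip in the definition of $\rho_A$ together with the standard fact that the inradius of a convex body containing the origin equals the minimum of its support function over the relative unit sphere.

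First, I would rewrite $|\rho_A^-|$ by flipping signs. Since $\rho_A \le 0$, substituting $\alpha \mapsto -\alpha$ in the definition \eqref{affmargin} gives
$$|\rho_A^-| \;=\; -\rho_A \;=\; \inf_{\|\alpha\|_G = 1}\, \sup_{p \in \triangle}\, \alpha^T G p \;=\; \inf_{v \in \Circle_A}\, \sup_{p \in \triangle}\, v^T A p,$$
where the last equality uses the surjection $\alpha \mapsto v := A\alpha$ from $\{\alpha : \|\alpha\|_G = 1\}$ onto $\Circle_A$ (points in the kernel of $A$ are irrelevant since $\alpha^T G p = (A\alpha)^T(Ap)$ only sees $A\alpha$). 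The inner supremum is exactly the support function $h(v) := \sup_{x \in \mathrm{conv}(A)} v^T x$ evaluated at the unit vector $v \in \mathrm{lin}(A)$.

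Second, I would interpret the right-hand side of \eqref{eq:rhoaminus} as the relative inradius of $\mathrm{conv}(A)$, i.e.\ the radius of the largest Euclidean ball inside $\mathrm{lin}(A)$ centered at $\zeros$ that is contained in $\mathrm{conv}(A)$. Since $\rho_A \le 0$ implies by (classical) Gordan's theorem that $\zeros \in \mathrm{conv}(A)$, this set is a closed convex body in $\mathrm{lin}(A)$ containing the origin. A standard convex-analytic fact then yields
$$R^* \;:=\; \sup\{R : \|\alpha\|_G \le R \Rightarrow A\alpha \in \mathrm{conv}(A)\} \;=\; \inf_{v \in \Circle_A} h(v),$$
because $\mathrm{conv}(A)$ is the intersection over $v \in \Circle_A$ of the supporting halfspaces $\{x \in \mathrm{lin}(A) : v^T x \le h(v)\}$, and a centered ball of radius $R$ lies in this intersection iff $R \le h(v)$ for every such $v$. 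Combining this with the computation of the first step gives $R^* = |\rho_A^-|$, which is exactly the claim.

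The main obstacle will be the degenerate boundary case $\rho_A = 0$, where the origin sits on the relative boundary of $\mathrm{conv}(A)$: here $h$ vanishes in some direction of $\Circle_A$, so both sides are simultaneously zero and the inradius characterization must be stated carefully (allowing $R^* = 0$). For the strict case $\rho_A < 0$, one needs the small additional observation that $\zeros$ actually lies in the relative interior of $\mathrm{conv}(A)$ before invoking the support-function identity; this follows immediately because $h(v) \ge |\rho_A^-| > 0$ for every $v \in \Circle_A$. Apart from this bookkeeping, the proof is a routine application of convex duality and the reformulation of the affine margin in terms of the Gram matrix.
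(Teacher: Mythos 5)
Your argument is correct, and it reaches the identity by a different packaging than the paper. The paper proves the two inequalities in \eqref{eq:rhoaminus} directly: for ``$\geq$'' it takes an arbitrary unit $\alpha'$ and tests the explicit point $\tilde\alpha = -R\alpha'$ against the convex hull, and for ``$\leq$'' it runs a separation argument by hand (a hyperplane in $\mathrm{lin}(A)$ separating $A\alpha$ from $\mathrm{conv}(A)$, followed by Cauchy--Schwarz in the $G$-seminorm). You instead flip the sign in the minimax definition to get $|\rho_A^-| = \inf_{v\in\Circle_A} h(v)$ with $h$ the support function of $\mathrm{conv}(A)$, and then quote the standard fact that the relative inradius of a convex body about the origin equals $\inf_{v\in\Circle_A} h(v)$, using that $\mathrm{conv}(A)$ is the intersection of its supporting halfspaces within $\mathrm{lin}(A)$. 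The two proofs rest on the same machinery --- your ``standard convex-analytic fact'' is exactly what the paper's separation step establishes, and your ``ball $\subseteq$ halfspace iff $R\le h(v)$'' computation is the content of the paper's first inequality --- but your framing makes the statement transparent as an instance of the general inradius/support-function duality (and dovetails nicely with the paper's own later restatement $|\rho_A^-| = \sup\{R : R\CIRCLE_A\subseteq A\triangle\}$), at the cost of invoking that duality as a black box rather than giving a self-contained elementary argument. Your handling of the boundary case $\rho_A=0$ and of the surjectivity of $\alpha\mapsto A\alpha$ onto $\Circle_A$ is sound.
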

\begin{proof}
We split the proof into two parts, one for each inequality.

\noindent
\textbf{(1) For inequality $\geq$.} Choose any $R$ such that $A\alpha \in \mathrm{conv}(A)$ for any $\|\alpha\|_G \leq R$. Given an arbitrary $\|\alpha'\|_G=1$, put $\tilde{\alpha}:= -R \alpha'$. By our assumption on $R$, since $\|\tilde{\alpha}\|_G = R$, we can infer that $A\tilde{\alpha} \in \mathrm{conv}(A)$ implying there exists a $\tilde{p} \in \triangle$ such that  $A\tilde{\alpha} = A\tilde{p}$. Also
\[
\alpha'^TG\tilde{p} ~=~  \alpha'^TG\tilde{\alpha}~=~ -R \|\alpha'\|_G^2 ~=~ -R.
\]
Thus $\inf_{p \in \triangle} \ \alpha'^TGp ~\leq~ -R.$ Since this holds for any $\|\alpha'\|_G = 1$, it follows that  $$\sup_{\|\alpha\|_G=1}\inf_{p \in \triangle}  \alpha^TGp ~\leq~ -R.$$ In other words, $|\rho_A^-| ~\geq~ R.$

\medskip

\noindent
\textbf{(2) For inequality $\leq$.} It suffices to show $\|\alpha\|_G \leq |\rho_A^-| \Rightarrow A\alpha \in \mathrm{conv}(A)$. We will prove the contrapositive $A\alpha \notin \mathrm{conv}(A) \Rightarrow \|\alpha\|_G > |\rho_A^-|$.
Since $\mathrm{conv}(A)$ is closed and convex, if $A\alpha \notin \mathrm{conv}(A)$, then there exists a hyperplane separating $A\alpha$ and $\mathrm{conv}(A)$ in lin$(A)$.
 That is, there exists $(\beta, b)$ with  $\|A\beta\|=1$ in  lin($A$) and a constant $b \in \R$ such that $\beta^T A^T A \alpha = \beta^T G \alpha < b$  
 and $\beta^T A^T A p = \beta^T G p \ge b$ for all $p \in \triangle$.   In particular,
\[
\beta^T G \alpha < \inf_{p \in \triangle} \beta^T G p \le \sup_{\|\beta\|_G=1}\inf_{p \in \triangle} \beta^T G p =  \rho_A^-.
\]
Since $\rho_A^- \le 0$, it follows that $|\rho_A^-| < |\beta^T G \alpha|
\leq \|\beta\|_G \|\alpha\|_G = \|\alpha\|_G.$
\end{proof}

One might be tempted to deal with the usual margin and prove that
\begin{equation}\label{eq:oldrhoaminus}
|\rho| ~=~ \sup \Big \{ R ~\big |~ \|w\|\leq R \Rightarrow w \in \mathrm{conv}(A) \Big \}
\end{equation}
While the two definitions are equivalent for full-dimensional lin$(A)$, they differ when lin$(A)$ is not full-dimensional, which is especially relevant in the context of infinite dimensional reproducing kernel Hilbert spaces, but could even occur when $A$ is low rank. In this case, Eq.(\ref{eq:oldrhoaminus}) will always be zero since a full-dimensional ball cannot fit inside a finite-dimensional hull. The right thing to do is to only consider balls ($\|\alpha\|_G \leq R$) in the linear subspace spanned by columns of $A$ (or the relative interior of the convex hull of $A$) and not full-dimensional balls ($\|w\|\leq R$). The reason it matters is that it is this altered $|\rho_A^-|$ that determines rates for algorithms and the complexity of problem (D), and not the classical margin in Eq.(\ref{eq:margin}) as one might have expected.

\subsubsection*{``Radius Theorems''} 
Recall that $A\triangle = \{Ap : p \in \triangle \} = $  conv$(A)$, $\CIRCLE_A = \CIRCLE \cap \mbox{lin}(A)$, and $R\CIRCLE_A$ is just $\CIRCLE_A$ of radius R. Since $\|\alpha\|_G\leq R ~\Leftrightarrow~ \|A\alpha\| \leq R ~\Leftrightarrow~ A\alpha \in R\CIRCLE_A$, an enlightening restatement of Eq.\eqref{eq:rhoaminus} and Eq.\eqref{eq:oldrhoaminus}  is
\begin{eqnarray*}
|\rho_A^-| = \sup \Big \{R \ \big|\ R\CIRCLE_A \subseteq A\triangle \Big \} \mbox{, ~and~ }
|\rho| = \sup \Big \{R \ \big|\ R\CIRCLE \subseteq A\triangle \Big \}.
\end{eqnarray*}
 It can be read as ``largest radius (affine) ball that fits inside the convex hull''. There is a nice parallel to the smallest (overall) and smallest positive singular values of a matrix. Using $A\CIRCLE = \{Ax : x\in \CIRCLE\}$ for brevity,
\begin{eqnarray}
\sigma^+_{\min}(A) = \sup \Big \{R \ \big|\ R\CIRCLE_A \subseteq A\CIRCLE \Big \} \mbox{, ~and~ }
   \sigma_{\min}(A) = \sup \Big \{R \ \big|\ R\CIRCLE \subseteq A\CIRCLE \Big \}
\end{eqnarray}
This highlights the role of the margin is a measure of 
conditioning of the linear feasibility systems (P) and (D).  Indeed, there are a number of far-reaching extensions of the classical ``radius theorem'' of  \cite{EY36}.  The latter states that the Euclidean distance from a square non-singular matrix $A \in \R^{n \times n}$ to the set of singular matrices in $\R^{n \times n}$ is precisely $\sigma_{\min}(A)$.  In an analogous fashion, for the feasibility problems (P) and (D), the set $\Sigma$ of {\em ill-posed} matrices $A$ are those with $\rho =0$. Cheung and Cucker~\cite{CC01} show that for a given a matrix $A \in \R^{m\times n}$ with normalized columns, the margin is the largest perturbation of a row to get an ill-posed instance or the ``distance to ill-posedness'', i.e.
\begin{equation}
\min_{\tilde A \in \Sigma} \max_{i=1,\dots,n} \|a_i - \tilde a_i\|  = |\rho|.
\end{equation} 
See \cite{CC01,R95} for related discussions. 



\section{Gordan's Theorem with Margins}\label{sec:gordan}

We would like to make quantitative statements about what happens when either of the alternatives is satisfied \textit{easily} (with large positive or negative margin). Our preceding geometrical intuition suggests a refinement of Gordan's Theorem, namely Theorem~\ref{thm:gordan} below, that accounts for margins. Related results have been previously derived and discussed by Li and Terlaky~\cite{LT12} as well as by Todd and Ye~\cite{TY98}.  In particular, it can be shown that part 2 of Theorem~\ref{thm:gordan} could be obtained from \cite[Lemma 2.1 and Lemma 2.2]{TY98}.  Similarly, parts 2 and 3 could be recovered from \cite[Theorem 5 and Theorem 6]{LT12}.  We give a succinct and simple proof of Theorem~\ref{thm:gordan} by relying on  Proposition~\ref{margindual} and Proposition~\ref{thm:radthm}.  Theorem~\ref{thm:gordan} could also be proven, albeit less succinctly, via separation arguments  from convex analysis.

\begin{theorem}\label{thm:gordan}
For any problem instance $A$ and any constant $\gamma \geq 0$, 
\begin{enumerate}
\item Either $\exists w \in \Circle_A$ s.t. $A^T w > \zeros$, or  $\exists p\in \triangle$ s.t. $Ap=\zeros$.
\item Either $\exists w \in \Circle_A$ s.t. $A^T w > \gamma \ones$, or  $\exists p\in \triangle$ s.t. $\|Ap\| \leq \gamma$.
\item Either  $\exists w \in \Circle_A$ s.t. $A^T w > -\gamma \ones$, or $\forall v \in \gamma \CIRCLE_A$ \ $\exists p_v \in \triangle$ s.t. $v = Ap_v$.
\end{enumerate}
\end{theorem}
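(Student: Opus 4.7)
The plan is to reduce each of the three alternatives to a single threshold comparison on $\rho_A$, and on one side of the threshold use attainment of the sup in the definition of $\rho_A$, while on the other side invoke one of Proposition~\ref{margindual} or Proposition~\ref{thm:radthm} together with attainment of the relevant infimum on a compact set. Part 1 is the $\gamma=0$ specialization of parts 2 and 3, so I would derive it as a free consequence and concentrate on the two quantitative statements.

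For part 2, I would split on $\rho_A > \gamma$ versus $\rho_A \leq \gamma$. In the first case, since $\Circle_A$ is compact and $w \mapsto \min_i w^T a_i$ is continuous, the sup defining $\rho_A$ is attained at some $w^* \in \Circle_A$ with $\min_i {w^*}^T a_i = \rho_A > \gamma$, which is exactly the strict inequality $A^T w^* > \gamma \ones$. In the complementary case $\rho_A \leq \gamma$, we have $\rho_A^+ \leq \gamma$ (using $\gamma \geq 0$), and Proposition~\ref{margindual} rewrites this as $\inf_{p \in \triangle}\|Ap\| \leq \gamma$. Compactness of $\triangle$ produces a minimizer $p^*$ with $\|Ap^*\| \leq \gamma$, which is the required certificate.

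For part 3, I would again split on $\rho_A > -\gamma$ versus $\rho_A \leq -\gamma$. The first case is handled by the same attainment-of-sup argument, producing $w^* \in \Circle_A$ with $A^T w^* > -\gamma\ones$. In the second case $|\rho_A^-| \geq \gamma$, and the contrapositive step already inside the proof of Proposition~\ref{thm:radthm} gives the implication $\|\alpha\|_G \leq |\rho_A^-| \Rightarrow A\alpha \in \mathrm{conv}(A)$. Given an arbitrary $v \in \gamma \CIRCLE_A$, I would pick any $\alpha$ with $v = A\alpha$ (possible because $v \in \mathrm{lin}(A)$) and use the identity $\|\alpha\|_G = \|A\alpha\| = \|v\| \leq \gamma \leq |\rho_A^-|$ to place $v = A\alpha$ inside $\mathrm{conv}(A) = A\triangle$, yielding the required $p_v \in \triangle$.

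The main subtlety to watch is the boundary case where $\rho_A$ equals the threshold exactly ($\gamma$, $0$, or $-\gamma$). There the strictly-phrased first alternative may fail, and the whole arrangement only works because each second alternative is non-strict and the relevant infima are attained on compact sets; I would flag this explicitly so the dichotomy reads cleanly. A minor point worth noting is that the conversion from $v \in \gamma \CIRCLE_A$ to an $\alpha$ with $\|\alpha\|_G \leq \gamma$ needs no minimum-norm pre-image: the seminorm identity $\|\alpha\|_G = \|A\alpha\|$ makes every pre-image admissible, which keeps the argument short.
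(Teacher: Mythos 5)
Your proposal is correct and follows essentially the same route as the paper's proof: part 1 as the $\gamma=0$ case, and parts 2 and 3 reduced to the threshold comparisons $\rho_A\le\gamma$ and $\rho_A\le-\gamma$, then settled by Proposition~\ref{margindual} and Proposition~\ref{thm:radthm} respectively. Your explicit attention to attainment of the sup/inf and to the boundary cases is a slightly more careful rendering of the same argument, not a different one.
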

\begin{proof} The first statement is the usual form of Gordan's Theorem.  It is also a particular case of the other two when $\gamma = 0$.  Thus, we will prove the other two:
\begin{enumerate}
\setcounter{enumi}{1}
\item If the first alternative does not hold, then from the definition of $\rho_A$ it follows that $\rho_A \leq \gamma$.  In particular, $\rho_A^+ \le \gamma$.  To finish, observe that by Proposition~\ref{margindual} there exists $p \in \triangle$ such that 
\begin{equation}
\|Ap \| = \rho_A^+ ~\leq~ \gamma.
\end{equation}
\item Analogously to the previous case, if the first alternative does not hold, then $\rho_A \leq -\gamma$.  In particular, it captures 
\begin{equation}
|\rho_A^-| \ge \gamma.
\end{equation}
Observe that by Proposition~\ref{thm:radthm}, every point $ v\in \gamma \CIRCLE_A$ must be inside $\mathrm{conv}(A)$, that is, $v = Ap_v$ for some distribution $p_v\in \triangle$. 
\end{enumerate}
 One can similarly argue that in each case if the first alternative is true, then the second must be false. 
\end{proof}

In the spirit of radius theorems introduced in the previous section, the statements in Theorem~\ref{thm:gordan} can be equivalently written in the following succinct forms:
\begin{enumerate}
\item[1'.] Either $\{ w \in \Circle_A: A^T w > \zeros \} \neq \emptyset$, or $\zeros \in A\triangle$
\item[2'.] Either $\{w \in \Circle_A : A^T w > \gamma \ones \} \neq \emptyset$, or $\gamma \CIRCLE_A \cap A\triangle \neq \emptyset$
\item[3'.] Either $\{w \in \Circle_A: A^T w > -\gamma \ones \} \neq \emptyset$, or $\gamma \CIRCLE_A \subseteq A\triangle$ 
\end{enumerate}
As noted in the proof of Theorem~\ref{thm:gordan}, the first statement is a special case of the other two when $\gamma=0$.
In case 2, we have at least one witness $p$ close to the origin, and in 3, we have an entire ball of witnesses close to the origin.

\section{Hoffman's Theorem with Margins}\label{sec:hoffman}

Hoffman-style theorems are often useful to prove the convergence rate of iterative algorithms by characterizing the distance of a current iterate from a  target set. For example, a Hoffman-like theorem was also proved by \cite{HL12} (Lemma 2.3), where they use it to prove the linear convergence rate of the alternating direction method of multipliers, and in \cite{GPS12} (Lemma 4), where they use it to prove the linear convergence of a first order algorithm for calculating $\epsilon$-approximate equilibria in zero sum games. 

It is worth pointing out  that Hoffman, in whose honor the theorem is named and also an author of \cite{GHR95} whose proof strategy we follow in the alternate proof of Theorem \ref{thm:hoffpos}, himself appeared to have overlooked the intimate connection of the ``Hoffman constant'' ($\tau$ in Eq.(\ref{eq:hoffintro})) to the positive and negative margin, as we present in our theorems below.

\subsection{Hoffman's theorem for (D) when $\rho_A^- \neq 0$}

\begin{theorem}
Assume $A \in \R^{m\times n}$ is such that $|\rho_A^-| > 0$.  For $b\in \R^m$ define the ``witness'' set $W = \{x \geq \zeros | Ax = b\}$.  If $W \ne \emptyset$ then for all $x \geq \zeros$, 
\begin{equation}
\mathrm{dist}_1(x,W) ~\leq~ \frac{\|Ax-b\|}{|\rho_A^-|}
\end{equation}
where $\mathrm{dist}_1(x,W)$ is the distance from $x$ to $W$ measured by the $\ell_1$ norm $\|\cdot\|_1$.
\end{theorem}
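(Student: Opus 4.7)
The plan is to explicitly construct, for each candidate $x \ge \zeros$, a witness $x^\star \in W$ whose $\ell_1$-distance from $x$ is at most $\|Ax-b\|/|\rho_A^-|$. The construction will rely on Proposition~\ref{thm:radthm}, which says that the ball of radius $|\rho_A^-|$ inside $\mathrm{lin}(A)$ is entirely contained in $\mathrm{conv}(A)=A\triangle$.

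First I would set $r := Ax - b$ and dispose of the trivial case $r=\zeros$, for which $x \in W$ and the inequality holds with $0$ on both sides. Otherwise, the key observation is that $r \in \mathrm{lin}(A)$: indeed, $Ax \in \mathrm{lin}(A)$, and because $W \ne \emptyset$ there exists some $x_0 \ge \zeros$ with $Ax_0 = b$, so $b \in \mathrm{lin}(A)$ as well. Therefore the unit vector $-r/\|r\|$ lies in $\Circle_A$, and scaling it by $|\rho_A^-|$ gives a point in the relative-interior ball from Proposition~\ref{thm:radthm}. Applying that proposition yields a distribution $p \in \triangle$ with
\begin{equation*}
Ap \;=\; -|\rho_A^-| \cdot \frac{r}{\|r\|}.
\end{equation*}

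Next I would rescale: define $q := \frac{\|r\|}{|\rho_A^-|}\, p$, so that $q \ge \zeros$, $\|q\|_1 = \|r\|/|\rho_A^-|$, and $Aq = -r$. Then set $x^\star := x + q$. By construction $x^\star \ge \zeros$ and
\begin{equation*}
Ax^\star \;=\; Ax + Aq \;=\; Ax - r \;=\; b,
\end{equation*}
so $x^\star \in W$. Finally,
\begin{equation*}
\mathrm{dist}_1(x,W) \;\le\; \|x - x^\star\|_1 \;=\; \|q\|_1 \;=\; \frac{\|r\|}{|\rho_A^-|} \;=\; \frac{\|Ax-b\|}{|\rho_A^-|},
\end{equation*}
which is the claimed bound.

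There is no serious obstacle; the only subtlety is ensuring $r \in \mathrm{lin}(A)$ so that Proposition~\ref{thm:radthm} is applicable --- this is where the hypothesis $W \ne \emptyset$ gets used in an essential way. Everything else is a one-line rescaling. Note also that the construction respects the $\ell_1$ norm on the left because we only \emph{add} a non-negative vector $q$ to $x$, so $\|x - x^\star\|_1 = \|q\|_1 = \ones^T q$ with no cancellation; this is what makes the constant $1/|\rho_A^-|$ tight rather than merely an upper bound up to dimensional factors.
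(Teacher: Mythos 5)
Your construction is exactly the paper's: both take the point $|\rho_A^-|\,(b-Ax)/\|Ax-b\|$ in the $|\rho_A^-|$-ball of $\mathrm{lin}(A)$, invoke the containment of that ball in $\mathrm{conv}(A)$ (you cite Proposition~\ref{thm:radthm} directly, the paper routes through part 3 of Theorem~\ref{thm:gordan}, which is the same fact), rescale the resulting distribution, and add it to $x$ to land in $W$. The argument is correct and essentially identical to the paper's proof.
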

\begin{proof}
Given $x \geq \zeros$ with $Ax \neq b$, consider a point
\begin{equation}
v ~=~ |\rho_A^-| \frac{b-Ax}{\|Ax-b\|}
\end{equation}
Note that $\|v\| = |\rho_A^-|$ and crucially $v \in \mathrm{lin}(A)$ (since $b \in \mathrm{lin}(A)$ since $W \neq \emptyset$). Hence, by Theorem \ref{thm:gordan}, there exists a distribution $p$ such that $v = Ap$.
Define
\begin{equation}
\bar x ~=~ x + p \frac{\|Ax-b\|}{|\rho_A^-|}
\end{equation}
Then, by substitution for $p$ and $v$ one can see that
\begin{equation}
A\bar x = Ax + v \frac{\|Ax-b\|}{|\rho_A^-|} = Ax + (b-Ax) = b
\end{equation}
Hence $\bar x \in W$, and $\mathrm{dist}_1(x,W) \leq \|x-\bar x\|_1 = \frac{\|Ax-b\|}{|\rho_A^-|}$. \end{proof}

The following variation (using witnesses only in $\triangle$) on the above theorem also holds.  This result is closely related to \cite[Lemma 2]{SP13} and has essentially the same proof.

\begin{proposition}\label{thm:hoffneg}
Assume $A \in \R^{m\times n}$ is such that $|\rho_A^-| > 0$. Define the set 
of witnesses $W = \{p \in \triangle | Ap = \zeros\}$.
Then at any $p \in \triangle$,
\begin{equation}
\mathrm{dist}_1(p,W)~\leq~\frac{2 \|Ap\|}{\|Ap\|+|\rho_A^-|}\le
\frac{2 \|Ap\|}{|\rho_A^-|} = \frac{ 2 \|p\|_G}{|\rho_A^-|}. 
\end{equation}
\end{proposition}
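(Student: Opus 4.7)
The plan is to mimic the construction used in the preceding theorem but insist that the target witness stay inside the simplex $\triangle$ rather than merely the nonnegative orthant. Assume $Ap \ne \zeros$ (the case $Ap=\zeros$ is trivial). I would blend $p$ with an auxiliary distribution $q\in\triangle$ chosen so that $Aq$ points opposite to $Ap$; then a convex combination $\bar p = \lambda p + (1-\lambda)q$ automatically lies in $\triangle$, and for exactly one choice of $\lambda$ the images cancel so that $A\bar p=\zeros$, placing $\bar p$ in $W$.

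Concretely, I would set
\[
v ~:=~ -|\rho_A^-|\,\frac{Ap}{\|Ap\|},
\]
which has norm $|\rho_A^-|$ and lies in $\mathrm{lin}(A)$ since $Ap$ does. Theorem~\ref{thm:gordan}(3), which is the radius characterization of $|\rho_A^-|$ (Proposition~\ref{thm:radthm}) in disguise, then produces a $q \in \triangle$ with $Aq = v$. Solving $\lambda Ap + (1-\lambda)v = \zeros$ for $\lambda$ is a one-line calculation that yields
\[
1-\lambda ~=~ \frac{\|Ap\|}{\|Ap\|+|\rho_A^-|}.
\]

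To finish, I would bound $\|p-\bar p\|_1 = (1-\lambda)\|p-q\|_1$ using the elementary fact that any two points of $\triangle$ are at $\ell_1$-distance at most $2$; this produces the first (and sharpest) inequality in the statement. The middle inequality just drops the $\|Ap\|$ summand from the denominator, and the final equality $\|Ap\|=\|p\|_G$ is the definition of the semi-norm $\|\cdot\|_G$. The one structural subtlety, compared with the previous theorem, is that one cannot simply translate $p$ along a ray and hope to remain in $\triangle$; interpolating between $p$ and a carefully chosen $q$ is essential, and the sign choice for $v$ is what makes the interpolation actually annihilate $Ap$. That is the only place where any thought is required; the rest is bookkeeping.
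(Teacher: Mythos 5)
Your proposal is correct and follows essentially the same route as the paper: the same choice of $v = -|\rho_A^-|\,Ap/\|Ap\|$, the same appeal to the radius characterization of $|\rho_A^-|$ to obtain $q\in\triangle$ with $Aq=v$, the same convex combination (your $1-\lambda$ is the paper's $\lambda$), and the same bound $\|p-q\|_1\le 2$ on the $\ell_1$-diameter of $\triangle$. No gaps.
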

\begin{proof}
Assume $Ap \ne 0$ as otherwise there is nothing to show. Consider 
$v:= -\frac{|\rho_A^-|}{\|Ap\|} Ap.$  
Since $v\in\text{lin}(A)$ and $\|v\| = |\rho_A^-|$, Proposition~\ref{thm:radthm} implies that $v = Ap'$ for some $p'\in\triangle$.  Thus for $\lambda:=\frac{\|Ap\|}{\|Ap\|+|\rho_A^-|}$ we have $\tilde p:=\lambda p' + (1-\lambda)p\in W$ and 
\[
\mathrm{dist}_1(p,W) \le \|p-\tilde p\|_1 = \lambda \|p - p'\|_1 \le 2\lambda =
\frac{2 \|Ap\|}{\|Ap\|+|\rho_A^-|}=\frac{2 \|Ap\|}{|\rho_A^-|} = \frac{ 2 \|p\|_G}{|\rho_A^-|}.
\]
\end{proof}



\subsection{Hoffman's theorem for (P)  when $\rho_A^+ \neq 0$}

\begin{theorem}\label{thm:hoffpos}
Define $S = \{ y | A^T y \geq c \}$ for some vector $c$. Then, for all $w \in \mathbb{R}^d$,
\begin{equation*}
\mathrm{dist}(w,S) \leq \frac{\|[A^T w - c ]^-\|_\infty}{\rho_A^+}
\end{equation*}
where $\mathrm{dist}(w,S)$ is the $\|\cdot\|_2$-distance from $w$ to $S$ and $(x^-)_i = \min\{x_i,0\}$. 
\end{theorem}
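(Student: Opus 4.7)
The plan is to exploit the primal characterization of $\rho_A^+$ directly by constructing an explicit correction to $w$ that lands inside $S$ with the right norm. Since $\rho_A^+ > 0$ and $\Circle_A$ is compact, the supremum in the definition
\[
\rho_A^+ \;=\; \sup_{u \in \Circle_A} \min_{i} u^T a_i
\]
is attained: there exists $u^* \in \Circle_A$ (so $\|u^*\| = 1$ and $u^* \in \mathrm{lin}(A)$) satisfying $A^T u^* \geq \rho_A^+ \mathbf{1}$ componentwise. This $u^*$ is the "direction of deepest feasibility" and will be the direction along which I shift $w$.

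Next, let $\delta := \|[A^T w - c]^-\|_\infty$ measure the worst constraint violation. By definition of the negative-part operator, this exactly says $A^T w - c \geq -\delta\,\mathbf{1}$. The natural candidate point is then
\[
y \;:=\; w + \frac{\delta}{\rho_A^+}\, u^*.
\]
Verification is immediate: on the one hand,
\[
A^T y \;=\; A^T w + \frac{\delta}{\rho_A^+}\, A^T u^* \;\geq\; A^T w + \delta\,\mathbf{1} \;\geq\; c,
\]
so $y \in S$; on the other hand, $\|w - y\| = (\delta/\rho_A^+)\,\|u^*\| = \delta/\rho_A^+$. Therefore
\[
\mathrm{dist}(w, S) \;\leq\; \|w - y\| \;=\; \frac{\|[A^T w - c]^-\|_\infty}{\rho_A^+},
\]
which is the claimed bound.

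There is no serious obstacle here: the whole argument is a one-step geometric construction driven by the definition of $\rho_A^+$. The only mild subtlety is justifying that the supremum defining $\rho_A^+$ is attained, which follows from continuity of $u \mapsto \min_i u^T a_i$ on the compact set $\Circle_A$ (alternatively, one can use an $\varepsilon$-approximate maximizer $u^*_\varepsilon$ with $A^T u^*_\varepsilon \geq (\rho_A^+ - \varepsilon)\mathbf{1}$, push $y$ slightly further, and let $\varepsilon \downarrow 0$). Note also that the hypothesis $\rho_A^+ > 0$ ensures $S \neq \emptyset$ for any $c$: for $t$ large enough, $A^T(t u^*) \geq t \rho_A^+ \mathbf{1} \geq c$, so the statement is never vacuous. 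The authors hint at an alternate, more analytic proof in the style of G\"uler–Hoffman–Rothblum via a duality argument, but the direct construction above is essentially tight and uses Proposition~\ref{margindual} only implicitly through the existence of the deepest-feasibility direction $u^*$.
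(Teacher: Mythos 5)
Your construction is correct and is essentially identical to the paper's primary proof: both shift $w$ by $\frac{\|[A^Tw-c]^-\|_\infty}{\rho_A^+}\,\bar w$ along a unit direction $\bar w \in \Circle_A$ achieving $A^T\bar w \ge \rho_A^+\ones$, and verify membership in $S$ by the same componentwise inequality. Your added remarks on attainment of the supremum and non-vacuousness of $S$ are fine but not substantively different; the paper's alternate LP-duality proof is a genuinely different route, which you correctly note but do not pursue.
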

\begin{proof}
Since $\rho_A^+ > 0$, the definitions of margin~\eqref{eq:margin} and affine margin~\eqref{affmargin} imply that there exists $\bar w \in \Circle_A$ with $A^T\bar w \ge \rho_A^+\ones$. Suppose, $A^T w \not\ge c$. Then we can add a multiple of $\bar w$ to $w$ as follows. Let $a = [c-A^T w]^+ = -[A^T w -c]^-$ where $(x^+)_i = \max\{x_i,0\}$ and $(x^-)_i = \min\{x_i,0\}$.  Since $a \ge c - A^Tw$ and $a \ge 0$, we have $\|a\|_\infty \ones \ge c-A^Tw$ and consequently
\begin{equation*}
A^T \left(w + \frac{\|a\|_\infty}{\rho_A^+}\bar w \right) ~\ge~ A^T w + \|a\|_\infty \ones \ge A^T w + (c-A^T w)= c .
\end{equation*}
Hence, $w + \frac{\|a\|_\infty}{\rho_A^+}\bar w \in S$ whose distance from $w$ is precisely $\frac{\|a\|_\infty}{\rho_A^+}$. 
\end{proof}

The interpretation of the preceding theorem is that the distance to feasibility for the problem (P) is governed by the magnitude of the largest mistake and the positive affine-margin of the problem instance $A$. 

We also provide an alternative proof of the theorem above, since proving the same fact from completely different angles can often yield insights. We follow the techniques of \cite{GHR95}, though we significantly simplify it. This is perhaps a more classical proof style, and possibly more amenable to other bounds not involving the margin, and hence it is instructive for those unfamiliar with proving these sorts of  bounds.


\begin{proof}[Alternate Proof of Theorem \ref{thm:hoffpos}]
For any given $w$, define $a = -(A^Tw-c )^- = (-A^T w+c )^+$ and hence note that $a \geq -(A^T w-c )$. 
\begin{eqnarray}
\min_{A^T u \geq c} \|w - u\| &=& \min_{A^T (u-w) \geq -A^Tw+c} \|w - u\| 
~=~ \min_{A^T z \geq -A^Tw+c} \|z\| \nonumber\\
&=& \sup_{\|\mu\| \leq 1} \left( \min_{A^T z \geq -A^Tw+c} \mu^T z \right) \label{eq:L2dual}\\
&=& \sup_{\|\mu\| \leq 1} \left ( \sup_{p \geq \zeros, Ap = \mu} p^T(-A^T w+c )\right ) \label{eq:LPdual}\\
&=& \sup_{\|p\|_G \leq 1, p\geq \zeros} p^\top (-A^T w+c)\label{eq:2G}\\
&\leq& \sup_{\|p\|_G \leq 1, p\geq \zeros} p^T a
~\leq~ \sup_{\|p\|_G \leq 1, p\geq \zeros} \|p\|_1 \|a\|_\infty\label{eq:CB}\\
&=& \frac{\|a\|_\infty}{\rho^+_A} \nonumber 
\end{eqnarray}
We used the self-duality of $\|.\|$ in Eq.(\ref{eq:L2dual}), LP duality for Eq.(\ref{eq:LPdual}),   $\|Ap\| = \|p\|_G$ by definition for Eq.(\ref{eq:2G}), and Holder's inequality in Eq.(\ref{eq:CB}). 
The last equality
follows because $\frac1{\rho_A^+} ~=~ \max_{\|p\|_G = 1, p\geq \zeros} \|p\|_1$, since $\rho_A^+ = \inf_{p \geq \zeros, \|p\|_1 = 1} \|p\|_G$ by Proposition~\ref{margindual}.  
\end{proof}




\section{ The Perceptron Algorithm: New Insights }\label{sec:NP}
 The Perceptron Algorithm was introduced and analysed by \cite{B62,N62,R58} 
 to solve the primal problem (P), with many variants in the machine learning literature. 
For ease of notation throughout this section assume $A = \left[\begin{array}{ccc}a_1 & \cdots & a_n \end{array} \right] \in \R^{d\times n}$ and $\|a_i\|=1,\; i=1,\dots,d$.
 The classical algorithm starts with $w_0:= a_i$ for any $i$, and 
in iteration $t$ performs
\begin{flalign*}
&\mbox{(choose any mistake)} &a_i &~:~ w_{t-1}^T a_i \; \le \; 0. &\nonumber\\
&&w_{t} &~\leftarrow~ w_{t-1}+a_i.&
\end{flalign*}
A variant called Normalized Perceptron which, as we point out in Theorem~\ref{thm:NPmargin} below, is a subgradient method, only updates on the worst mistake, and tracks a normalized $w$ that which is a convex combination of $a_i$'s.
\begin{flalign*}
&\mbox{(choose the worst mistake)} &a_i &~=~ \arg\min_{a_i} \{w_{t-1}^T a_i \}&\nonumber\\ 
&&w_{t} &~\leftarrow~ \Big (1-\tfrac1{t} \Big )w_{t-1}+\Big(\tfrac1{t} \Big)a_i.&
\end{flalign*}

The best known property of the unnormalized Perceptron or the Normalized Perceptron algorithm is that when (P) is strictly feasible with margin $\rho_A^+$, it finds such a solution $w$ in $1/\rho_A^{+2}$ iterations, as proved by \cite{N62,B62}. What is less obvious is that the Perceptron is actually \textit{primal-dual} in nature, as stated in the following result of Li and Terlaky~\cite{LT12}.  In the following statement by an {\em $\epsilon$-certificate for (D)} we mean a  vector $\alpha \in \triangle$ such that $\|A\alpha\| \le \epsilon.$ 

\begin{proposition}\label{prop:primdual}
If (D) is feasible, the Perceptron algorithm (when normalized) yields an $\epsilon$-certificate $\alpha_t$ for (D) in $1/\epsilon^2$ steps.  
\end{proposition}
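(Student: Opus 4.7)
The plan is to track the norm of the Normalized Perceptron iterate $w_t$ directly, since $w_t$ is, by construction, a convex combination of the columns $a_i$, so we may write $w_t = A\alpha_t$ with $\alpha_t \in \triangle$ and then an $\epsilon$-certificate for (D) is exactly an iterate with $\|w_t\| = \|A\alpha_t\| \le \epsilon$. So the whole task reduces to proving $\|w_t\| \le 1/\sqrt t$.

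First I would exploit feasibility of (D). If $p^* \in \triangle$ satisfies $Ap^* = \zeros$, then for any $w$,
\[
\min_i w^T a_i \;=\; \inf_{p \in \triangle} w^T A p \;\le\; w^T A p^* \;=\; 0.
\]
In the Normalized Perceptron the chosen index satisfies $w_{t-1}^T a_i = \min_j w_{t-1}^T a_j$, so $w_{t-1}^T a_i \le 0$ at every step. Combined with $\|a_i\| = 1$, squaring the update $w_t = (1-\tfrac1t) w_{t-1} + \tfrac1t a_i$ gives
\[
\|w_t\|^2 \;=\; (1-\tfrac1t)^2 \|w_{t-1}\|^2 + \tfrac{2}{t}(1-\tfrac1t)\, w_{t-1}^T a_i + \tfrac{1}{t^2} \;\le\; (1-\tfrac1t)^2 \|w_{t-1}\|^2 + \tfrac{1}{t^2}.
\]

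Next I would close the argument by induction on $t$. The base case $t=1$ gives $w_1 = a_i$ (for whichever $i$ is chosen), so $\|w_1\|^2 = 1 = 1/1$. Assuming $\|w_{t-1}\|^2 \le 1/(t-1)$, the recursion yields
\[
\|w_t\|^2 \;\le\; \frac{(t-1)^2}{t^2}\cdot \frac{1}{t-1} + \frac{1}{t^2} \;=\; \frac{t-1}{t^2} + \frac{1}{t^2} \;=\; \frac{1}{t}.
\]
Therefore $\|A\alpha_t\| = \|w_t\| \le 1/\sqrt{t}$, and $t \ge 1/\epsilon^2$ iterations suffice to produce an $\epsilon$-certificate.

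I do not anticipate a substantial obstacle: the only subtle point is to verify that feasibility of (D) forces $w_{t-1}^T a_i \le 0$ at the worst-mistake index, which is precisely where Gordan's Theorem (case 1 of Theorem \ref{thm:gordan}) or equivalently Proposition \ref{margindual} enters, since (D) feasible is exactly $\rho_A^+ = 0$ and hence the origin lies in $\mathrm{conv}(A)$. Everything else is a one-line recursion plus induction. A pleasant side observation is that the same computation shows $\|w_t\|^2 \to \rho_A^{+2}$ in the infeasible case, recovering the convergence to the center of the MEB anticipated by Proposition \ref{meb}.
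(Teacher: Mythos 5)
Your proof is correct, and it is worth noting that the paper itself does not prove Proposition~\ref{prop:primdual} at all --- it is stated as a known result of Li and Terlaky \cite{LT12} --- so you have supplied a self-contained argument where the paper only gives a citation. Your argument is the standard one: since $w_t$ is by construction a convex combination of the columns of $A$, an $\epsilon$-certificate is exactly an iterate with $\|w_t\|\le\epsilon$; feasibility of (D) gives $\min_i w^Ta_i \le w^TAp^* = 0$ for the dual witness $p^*$, which kills the cross term in the expansion of $\|w_t\|^2$; and the induction $\|w_t\|^2 \le 1/t$ closes cleanly. Two minor remarks. First, you do not need Gordan's theorem or Proposition~\ref{margindual} for the key inequality $w_{t-1}^Ta_i\le 0$; the direct hypothesis that (D) is feasible already hands you $p^*$, as your own display shows, so the appeal to Theorem~\ref{thm:gordan} is decorative. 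Second, your closing aside that ``the same computation shows $\|w_t\|^2\to\rho_A^{+2}$ in the infeasible case'' is not actually delivered by the same computation --- when (D) is infeasible the cross term $w_{t-1}^Ta_i$ need not be nonpositive, and the convergence of $\|w_t\|$ to $\rho_A^+$ requires the separate subgradient-method analysis carried out in the proof of Theorem~\ref{thm:NPmargin} (Eq.~\eqref{eq:ub}). Neither point affects the validity of the proof of the proposition itself.
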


Proposition~\ref{prop:primdual} and Proposition~\ref{thm:hoffneg} readily yield the following result.

\begin{corollary}\label{the.corollary}
Assume (D) is feasible and $|\rho_A^-| > 0$.  Define the set of witnesses $W=\{\alpha\in\triangle|A\alpha=0\}$.  If $w_t = A\alpha_t$ is the sequence of NP iterates then
\[
\mathrm{dist}_1(\alpha_t,W) \le \frac{2 }{|\rho_A^-|\sqrt{t}}
\]
\end{corollary}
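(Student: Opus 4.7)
The plan is to compose the two cited results directly, with essentially no calculation. The corollary's bound is exactly what one gets by running Proposition~\ref{thm:hoffneg} at the iterate $\alpha_t$ and then substituting the Normalized Perceptron convergence rate from Proposition~\ref{prop:primdual}.

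Concretely, I would first invoke Proposition~\ref{prop:primdual}: since (D) is feasible, after $t$ iterations the NP iterate $\alpha_t \in \triangle$ is an $\varepsilon$-certificate for (D) with $\varepsilon = 1/\sqrt{t}$, i.e.\ $\|A\alpha_t\| \le 1/\sqrt{t}$. (One might want to rephrase the proposition in this direction rather than its inverse form ``$1/\varepsilon^2$ steps for accuracy $\varepsilon$''; this is just a trivial rearrangement.) Next, I would apply Proposition~\ref{thm:hoffneg} with $p = \alpha_t$, which is legal because $\alpha_t \in \triangle$ and $|\rho_A^-|>0$ by assumption. That proposition gives
\[
\mathrm{dist}_1(\alpha_t, W) \;\le\; \frac{2\|A\alpha_t\|}{|\rho_A^-|}.
\]
Chaining these two bounds yields $\mathrm{dist}_1(\alpha_t, W) \le \frac{2}{|\rho_A^-|\sqrt{t}}$, which is the claim.

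There is no real obstacle; the only thing to be careful about is that Proposition~\ref{thm:hoffneg} requires the witness set $W = \{\alpha \in \triangle : A\alpha = \zeros\}$ appearing in the corollary to be identical to the one in the proposition, which it is, and that the hypotheses ``(D) feasible'' and ``$|\rho_A^-|>0$'' of the corollary exactly cover the hypotheses of both invoked results. Also note that the corollary tacitly uses that NP's iterates $w_t$ can be written as $A\alpha_t$ for some $\alpha_t \in \triangle$; this is built into the NP update $w_t \leftarrow (1-1/t)w_{t-1} + (1/t)a_i$ starting from $w_0 = a_{i_0}$, which by induction keeps $w_t$ inside $\mathrm{conv}(A)$, so the requisite coefficient vector $\alpha_t \in \triangle$ always exists. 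With those observations in place the proof is a two-line composition of the earlier propositions.
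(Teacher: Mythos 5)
Your proposal is correct and is exactly the argument the paper intends: the corollary is stated as a direct consequence of Proposition~\ref{prop:primdual} (giving $\|A\alpha_t\|\le 1/\sqrt{t}$) chained with Proposition~\ref{thm:hoffneg} applied at $p=\alpha_t$. Your additional remarks about $\alpha_t\in\triangle$ and the matching of witness sets are accurate housekeeping and nothing more is needed.
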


\medskip

We prove two more nontrivial facts about the Normalized Perceptron that we have not found in the published literature for the case when (P) is feasible.
In this case not only does the Normalized Perceptron produce a \textit{feasible} $w$ in $O(1/\rho_A^{+2})$ steps, but on continuing to run the algorithm, $w_t$ will approach the \textit{optimal} $w$ that maximizes margin, i.e., achieves margin $\rho_A^+$. This is actually \textit{not} true with the classical Perceptron. 
The normalization in the following theorem is needed because $\|w_t\| \neq 1$. \\

\begin{theorem}\label{thm:NPmargin}
Assume (P) is feasible. If $w_t=A\alpha_t,\; t=0,1,\dots$ is the sequence of NP iterates with margin $\rho_t = \inf_{p \in \triangle} \frac{w_t}{\|w_t\|}^TAp$, and the optimal point  $w^* := \arg\sup_{\|w\|=1} \inf_{p \in \triangle} w^T A p$ achieves the optimal margin  $\rho = \rho_A^+ = \sup_{w\in\Circle}\inf_{p \in \triangle} w^{T}Ap$, then
\begin{equation*}
\rho^+_A - \rho_t ~\leq~ \Big \|\frac{w_t}{\|w_t\|} - w_* \Big\|  ~\leq~ \frac{4}{\rho_A^+ \sqrt t}.
\end{equation*}
\end{theorem}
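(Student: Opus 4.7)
The first inequality reduces to Lipschitz continuity of the margin functional $\psi(w) := \inf_{p\in\triangle} w^T Ap = \min_i w^T a_i$. Because each $\|a_i\| = 1$, the functional $\psi$ is $1$-Lipschitz on $\R^d$, so evaluating at the unit vectors $w_t/\|w_t\|$ and $w_*$ yields $|\rho_A^+ - \rho_t| \le \left\|\frac{w_t}{\|w_t\|} - w_*\right\|$, and the absolute value may be dropped since $w_*$ is a maximizer.

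For the second inequality, the plan is to recognize NP as the Frank--Wolfe method applied to the min-norm problem $\min_{p\in\triangle}\phi(p)$ with $\phi(p) := \tfrac12\|Ap\|^2$; by Proposition~\ref{margindual}, this program has optimal value $\tfrac12(\rho_A^+)^2$, attained at some $p^*$ with $w^* := Ap^* = \rho_A^+ w_*$ equal to the Euclidean projection of the origin onto $\mathrm{conv}(A)$. Since $\nabla\phi(p_{t-1}) = A^T w_{t-1}$, the NP rule $i_t = \arg\min_i w_{t-1}^T a_i$ is precisely the Frank--Wolfe vertex oracle, and the convex combination with weight $1/t$ is the Frank--Wolfe averaging step. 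The quadratic $\phi$ has curvature constant bounded by $\max_{i,j}\|a_i-a_j\|^2 \le 4$; combining this with the convexity gap $\nabla\phi(p_{t-1})^T(e_{i_t}-p_{t-1}) \le \phi^* - \phi(p_{t-1})$ delivers a one-step recurrence from which I aim to extract the explicit bound $H_t := \|w_t\|^2 - (\rho_A^+)^2 \le 16/t$.

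The $H_t$-bound then translates to unit-vector closeness in two steps. First, the projection property of $w^*$ gives $w_t^T w^* \ge (\rho_A^+)^2$ for all $w_t \in \mathrm{conv}(A)$, hence $\|w_t - w^*\|^2 = \|w_t\|^2 - 2 w_t^T w^* + (\rho_A^+)^2 \le H_t$. Second, since $w_t = \tfrac{1}{t}\sum_s a_{i_s}$ and $w_*^T a_i \ge \rho_A^+$ for every $i$, one immediately has $w_*^T w_t \ge \rho_A^+$, and Cauchy--Schwarz forces $\|w_t\| \ge \rho_A^+$. Combining $\|w_t\| - \rho_A^+ = H_t/(\|w_t\|+\rho_A^+) \le H_t/(2\rho_A^+)$ with
\[
\left\|\frac{w_t}{\|w_t\|} - w_*\right\|^2 \;=\; 2 - \frac{2\,w_*^T w_t}{\|w_t\|} \;\le\; \frac{2(\|w_t\|-\rho_A^+)}{\|w_t\|} \;\le\; \frac{H_t}{(\rho_A^+)^2},
\]
and taking square roots with $H_t \le 16/t$ produces the stated bound $4/(\rho_A^+\sqrt{t})$.

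The main obstacle is extracting the clean $H_t \le 16/t$ from the Frank--Wolfe recurrence $h_t \le (1-1/t)h_{t-1} + 2/t^2$: naive telescoping yields only $h_t = O((\log t)/t)$, losing a $\sqrt{\log t}$ factor. Closing this gap likely requires either a refined potential-function argument exploiting that $\|a_{i_t} - w_{t-1}\|^2$ shrinks as $w_{t-1}$ approaches $w^*$, or a re-indexing that shows the $1/t$ schedule in NP is effectively the textbook Frank--Wolfe schedule $\gamma_t = 2/(t+2)$, for which $h_t \le 2C_f/(t+2) \le 8/(t+2)$ is a standard inductive exercise.
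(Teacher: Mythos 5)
Your first inequality is fine and matches the paper's argument (the paper writes it as a Cauchy--Schwarz step against $Ap_t$ with $\|Ap_t\|\le 1$, which is the same $1$-Lipschitz observation). The problem is the second inequality: its entire weight rests on $H_t=\|w_t\|^2-(\rho_A^+)^2\le 16/t$, and you have not proved this --- you correctly identify it as ``the main obstacle'' and neither of your two escape routes closes it. The re-indexing idea fails outright: the NP weights are genuinely $\gamma_t=1/t$, not $2/(t+2)$, and no change of index turns one schedule into the other. The Frank--Wolfe recurrence you derive, $h_t\le(1-1/t)h_{t-1}+2/t^2$, really does give only $O(\log t/t)$; the contraction factor $(1-1/t)$ is too weak, and even a sharper bookkeeping of the per-step decrease in $p$-space (using $\min_j w_{t-1}^Ta_j\le\rho_A^+\|w_{t-1}\|$) still yields a recurrence of the form $H_t\le(1-1/t)H_{t-1}+1/t^2$, which does not beat $O(\log t/t)$. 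So as written the proposal proves a bound of order $\sqrt{\log t/t}$, not $4/(\rho_A^+\sqrt t)$.

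The missing ingredient is a source of an extra factor of $2$ in the contraction, and the paper gets it from strong convexity in $w$-space rather than from the objective gap in $p$-space: NP is the subgradient method with steps $1/(t+1)$ for the $1$-strongly convex function $L(w)=\tfrac12\|w\|^2-\min_i w^Ta_i$, minimized at $\rho_A^+w_*$; strong convexity gives $g_t^T(w_t-\rho_A^+w_*)\ge\|w_t-\rho_A^+w_*\|^2$, hence $\|w_{t+1}-\rho_A^+w_*\|^2\le(1-\tfrac{2}{t+1})\|w_t-\rho_A^+w_*\|^2+\tfrac{4}{(t+1)^2}$, and induction gives $\|w_t-\rho_A^+w_*\|^2\le4/t$. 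If you want to stay inside your Frank--Wolfe picture, the analogous fix is to track $D_t:=\|w_t-\rho_A^+w_*\|^2$ instead of $H_t$: the vertex choice gives $w_{t-1}^Ta_{i_t}\le w_{t-1}^T(\rho_A^+w_*)$ and the projection property gives $w_{t-1}^T(\rho_A^+w_*)\ge(\rho_A^+)^2\le a_{i_t}^T(\rho_A^+w_*)$, whence $(w_{t-1}-\rho_A^+w_*)^T(a_{i_t}-\rho_A^+w_*)\le0$ and $D_t\le(1-1/t)^2D_{t-1}+4/t^2$, which closes to $D_t\le4/t$. Note also that your closing chain must then be rerouted through $D_t$ (e.g.\ via the paper's decomposition $\|w_t/\|w_t\|-w_*\|\le\tfrac{1}{\rho_A^+}(\|w_t-\rho_A^+w_*\|+|\|w_t\|-\rho_A^+|)$), because $D_t\le 4/t$ only gives $\|w_t\|-\rho_A^+\le2/\sqrt t$ and hence $H_t=O(1/\sqrt t)$, which fed into your inequality $\|w_t/\|w_t\|-w_*\|^2\le H_t/(\rho_A^+)^2$ would yield only a $t^{-1/4}$ rate.
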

\begin{proof}
Let $p_t := \arg\min_{p\in \triangle} w_t^TAp.$ Then
\[
\rho_A^+ - \rho_t = \inf_{p \in \triangle} w_*^{T}Ap - \frac{w_t}{\|w_t\|}^TAp_t \le 
\left(w_* - \frac{w_t}{\|w_t\|}\right)^T A p_t \le \Big \|\frac{w_t}{\|w_t\|} - w_* \Big\| \|Ap_t\| \le \Big \|\frac{w_t}{\|w_t\|} - w_* \Big\|.
\]
The last step because $\|a_i\|=1,\; i=1,\dots,n$ and $p\in\triangle.$

For the second inequality, first observe that
\begin{eqnarray}
\Big \|\frac{w_t}{\|w_t\|} - w_* \Big\| &=& \frac1{\|w_t\|} \Big \| w_t - \rho_A^+ w_* + (\rho_A^+ - \|w_t\|)w_*\Big \|\nonumber\\
&\leq& \frac1{\|w_t\|} \Big ( \|w_t - \rho_A^+ w_*\| + |\rho_A^+ - \|w_t\||\Big ) \nonumber \\
&\leq& \frac1{\rho_A^+} \Big ( \|w_t - \rho_A^+ w_*\| + |\rho_A^+ - \|w_t\||\Big )\label{eq:RSS2}
\end{eqnarray}
where the first inequality follows by the triangle inequality, and because $\|w_*\|=1$. The second inequality holds because $\rho_A^+ = \inf_{p \in \triangle} \|Ap\| $ and $\alpha_t \in \triangle$ implies that
\begin{equation}
\|w_t\| = \|A\alpha_t\| ~\geq~ \rho_A^+.\label{eq:lb}
\end{equation} 
The rest of the proof hinges on the fact that NP can be interpreted as a subgradient algorithm for the following problem: 
\begin{equation}\label{eq:NPSGD}
\min_{w \in \mathbb{R}^m}L(w) := \min_{w \in \mathbb{R}^m} \left(\half \|w\|^2 -\min_i  \{w^T a_i\} \right).
\end{equation}

We reproduce a short argument from  \cite{RP14,SP13} which shows that $L(w)$ is minimized at $\rho_A^+ w_*$. Let $\arg\min_\alpha L(w) = tw'$ for some $\|w'\|=1$ and some $t\in\R$. Substituting this into Eq.\eqref{eq:NPSGD}, we see that
\begin{equation*}
\min_{w \in \R^m} L(w) = \min_{t > 0} \{\half t^2 -t\rho_A^+ \} = -\tfrac1{2} \rho_A^{+2}
\end{equation*}
achieved at $t=\rho_A^+$ and $w'=w_*$. Hence $\arg\min_w L(w) = \rho_A^+ w_*$.

Note that the $(t+1)$-th iteration in the NP algorithm can be written as
\[
w_{t+1} = w_t - \frac{1}{t+1} g_t,
\]
for $ g_t = w_t - \arg\min_{a_i}\{w_t^Ta_i\} \in \partial L(w_t)$. 
Hence, the NP algorithm is a subgradient method for \eqref{eq:NPSGD}.  By construction, $L(w)$ is a 1-strongly convex function.  Since it is minimized at $\rho_A^+ w^*$, it follows that
\[
g_t^T(w_t - \rho_A^+w_*) \ge L(w_t) - L(\rho_A^+w_*)+\frac{1}{2}\|w_t - \rho_A^+w^*\|^2
\ge \|w_t - \rho_A^+w^*\|^2.\]
In addition, $\|g_t\| \le \|w_t\|(1 + \|a_i\|) \le 2 \|A\alpha_t\|  \le 2,$ so
\begin{align*}
\|w_{t+1} - \rho_A^+w_*\|^2 &= \left\| w_t - \frac{1}{t+1}g_t - \rho_A^+w_* \right\|^2 \\
& = \|w_{t} - \rho_A^+w_*\|^2  - \frac{2}{t+1}g_t^T(w_{t} - \rho_A^+w_*) + \frac{1}{(t+1)^2}\|g_t\|^2\\
&\le \left(1-\frac{2}{t+1} \right)\|w_{t} - \rho_A^+w_*\|^2 + \frac{4}{(t+1)^2}.
\end{align*}
It thus follows by induction on $t$ that
\begin{eqnarray}
\|w_t - \rho_A^+ w_*\| &\leq& 2/\sqrt t \nonumber \\ 
~\Rightarrow~ \|w_t\|- \rho_A^+ &\leq& 2/\sqrt t.\label{eq:ub}
\end{eqnarray}
This yields the required bound of $\frac{4}{\rho_A^+ \sqrt t}$ when plugged into Eq.(\ref{eq:RSS2}).
\end{proof}
Let us revisit the primal-dual formulation \eqref{eq:MEB} of the minimum enclosing ball problem.  The center of the minimum enclosing ball is precisely $c_*=\rho_A^+w_*$. Consequently the following result readily follows.

\begin{corollary}\label{cor.2} The sequence $w_t = A \alpha_t, \; t=0,1,\dots$ of NP iterates converges to the center  $c_* = \rho_A^+w_*$ of the  minimum enclosing ball problem \eqref{eq:MEB}.
\end{corollary}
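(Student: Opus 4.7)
The plan is to piggy-back on the intermediate bound $\|w_t - \rho_A^+ w_*\| \le 2/\sqrt{t}$ that was already extracted inside the proof of Theorem~\ref{thm:NPmargin} (the induction on $\|w_{t+1} - \rho_A^+ w_*\|^2$ culminating in Eq.~\eqref{eq:ub}). Given that bound, the only work left is to verify that the limit point $\rho_A^+ w_*$ of the iterates coincides with the center $c_*$ of the minimum enclosing ball defined by \eqref{eq:MEB}.

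To identify $c_* = \rho_A^+ w_*$, I would inspect the primal--dual pair \eqref{eq:MEB}. Stationarity of the MEB Lagrangian in the center variable gives $c_* = Ap^*$, where $p^* \in \triangle$ solves the dual $\max_{p\in\triangle} 1-\|Ap\|^2$; equivalently, $p^*$ minimizes $\|Ap\|$. By Proposition~\ref{margindual}, this minimum value equals $\rho_A^+$, so $\|c_*\| = \|Ap^*\| = \rho_A^+$. Next, since $\rho_A^+ > 0$ the saddle-point chain in Eq.~\eqref{eq:dual} is valid and $(w_*,p^*)$ is a saddle pair for $w^T A p$, giving
\[
\rho_A^+ \;=\; w_*^T A p^* \;\le\; \|w_*\|\,\|Ap^*\| \;=\; \rho_A^+.
\]
Equality in Cauchy--Schwarz together with $\|w_*\| = 1$ forces $Ap^* = \rho_A^+ w_*$, hence $c_* = \rho_A^+ w_*$ as claimed.

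With this identification, the corollary is immediate: the inequality $\|w_t - \rho_A^+ w_*\| \le 2/\sqrt{t}$ from the proof of Theorem~\ref{thm:NPmargin} rewrites as $\|w_t - c_*\| \le 2/\sqrt t$, giving convergence at rate $O(1/\sqrt t)$. The main (mild) obstacle is purely the identification step; the rate itself is already contained verbatim in the preceding proof and requires no new estimation. A harmless edge case is the degenerate scenario $\rho_A^+ = 0$, in which both $c_*$ and the limit of $w_t$ collapse to the origin (consistent with Proposition~\ref{prop:primdual}), so no separate argument is needed.
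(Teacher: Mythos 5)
Your proposal is correct and follows essentially the same route as the paper: the paper simply notes that the MEB center is $c_* = \rho_A^+ w_*$ and invokes the bound $\|w_t - \rho_A^+ w_*\| \le 2/\sqrt{t}$ from the proof of Theorem~\ref{thm:NPmargin}. Your saddle-point/Cauchy--Schwarz identification of $c_*$ merely makes explicit a step the paper leaves as an assertion.
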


The Normalized Perceptron algorithm also gives for free an estimate of $\rho_A^+$.

\begin{proposition}\label{prop:approxmargin} 
The Normalized Perceptron gives an $\epsilon$-approximation to the \textit{value} of the positive margin in $4/\epsilon^2$ steps. Specifically,
$$
\|w_{4/\epsilon^2}\| - \epsilon ~\leq~ \rho_A^+ ~\leq~ \|w_{4/\epsilon^2}\|
$$
\end{proposition}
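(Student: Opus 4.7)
The plan is to observe that both inequalities in the statement are essentially already contained in the proof of Theorem~\ref{thm:NPmargin}, and that the claim is obtained simply by specializing $t$ to $4/\epsilon^2$.

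First, for the upper bound $\rho_A^+ \le \|w_{4/\epsilon^2}\|$, I would invoke Eq.~\eqref{eq:lb}, which was established precisely because $w_t = A\alpha_t$ with $\alpha_t \in \triangle$; by Proposition~\ref{margindual}, $\rho_A^+ = \inf_{p\in\triangle}\|Ap\|$, so $\rho_A^+ \le \|A\alpha_t\| = \|w_t\|$ for every iterate, and in particular at $t = 4/\epsilon^2$.

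Next, for the lower bound $\|w_{4/\epsilon^2}\| - \epsilon \le \rho_A^+$, I would recall Eq.~\eqref{eq:ub} from the proof of Theorem~\ref{thm:NPmargin}: the subgradient analysis on the strongly convex objective $L(w) = \tfrac12\|w\|^2 - \min_i w^Ta_i$ yielded $\|w_t - \rho_A^+ w_*\| \le 2/\sqrt{t}$, which by the triangle inequality (and $\|w_*\|=1$) gives $\|w_t\| - \rho_A^+ \le 2/\sqrt{t}$. Plugging in $t = 4/\epsilon^2$ makes the right-hand side exactly $\epsilon$, producing $\|w_{4/\epsilon^2}\| \le \rho_A^+ + \epsilon$, which is the desired bound.

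There is no genuine obstacle here: the proposition is effectively a one-line corollary of Theorem~\ref{thm:NPmargin}. The only thing to be careful about is making explicit that the two ingredients used, the lower bound $\|w_t\| \ge \rho_A^+$ and the convergence rate $\|w_t\| - \rho_A^+ \le 2/\sqrt{t}$, were both proven unconditionally inside Theorem~\ref{thm:NPmargin} and do not require any additional feasibility assumption beyond $\rho_A^+ > 0$ (which is implicit in the statement since otherwise the approximation guarantee is vacuous).
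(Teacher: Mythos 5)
Your proposal is correct and follows essentially the same route as the paper: the paper's proof likewise cites Eq.~\eqref{eq:lb} and Eq.~\eqref{eq:ub} to get $\rho_A^+ \le \|w_t\| \le \rho_A^+ + 2/\sqrt{t}$ and then sets $t = 4/\epsilon^2$. No gaps.
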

\begin{proof}
The proof follows from Eq.\eqref{eq:ub} and Eq.\eqref{eq:lb}, which imply that $w_t$ satisfies $$\rho_A^+ \leq \|w_t\| \leq \rho_A^+ + 2/\sqrt t$$
whose rearrangement with $t=4/\epsilon^2$ completes the proof. 
\end{proof} 

It is worth noting that in sharp contrast to the estimate on $\rho_A^+$ given by Proposition~\ref{prop:approxmargin}, the question of finding elementary algorithms to estimate $|\rho_A^-|$ remains open.

\section{Discussion}

\subsection{Von-Neumann or Gilbert Algorithm for (D)}\label{sec.VNG} Von-Neumann described an iterative algorithm for solving dual (D) in a private communication with Dantzig in 1948, which was subsequently analyzed by the latter, but only published in \cite{D92}, and goes by the name of Von-Neumann's algorithm in optimization circles. Independently,  Gilbert~\cite{G66} described an essentially identical algorithm that goes by the name of Gilbert's algorithm in the computational geometry literature. We respect the independent findings in different literatures, and refer to it as the Von-Neumann-Gilbert (VNG) algorithm.  It starts from a point in conv($A$), say $w:=a_1$ and loops:
\begin{flalign*}
&\text{(choose furthest point)} &a_i &~=~ \arg\max_{a_i}\{\|w_{t-1}-a_i\|\} \\&\text{(line search, $\lambda \in [0,1]$)} &w_t &~\leftarrow ~ \arg\min_{w_\lambda} \|w_\lambda\|; ~ w_\lambda = \lambda w_{t-1} + (1-\lambda)a_i 
\end{flalign*}

Dantzig's paper showed that the Von-Neumann-Gilbert (VNG) algorithm can produce an $\epsilon$-approximate solution ($p$ such that $\|Ap\|\leq \epsilon$) to (D) in $1/\epsilon^2$ steps, establishing it as a dual algorithm as conjectured by Von-Neumann. Though designed for (D), Epelman and Freund~\cite{EF00} proved that when (P) is feasible, VNG also produces a feasible $w$ in $1/\rho_A^{+2}$ steps and hence VNG is also primal-dual like the Perceptron (as proved in Proposition \ref{prop:primdual}).  It readily follows that Theorem \ref{thm:NPmargin}, Corollary~\ref{the.corollary}, Corollary~\ref{cor.2}, and Proposition \ref{prop:approxmargin} hold as well with the Von-Neumann-Gilbert algorithm in place of the Normalized Perceptron algorithm.

Nesterov was the first to point out in a private note to \cite{EF97} that VNG is a Frank-Wolfe algorithm for
\begin{equation}\label{eq:VNGFW}
\min_{p \in \triangle}~ \|Ap\|
\end{equation}
Note that Eq.(\ref{eq:NPSGD}) is a relaxed version of Eq.(\ref{affmargin}), and also that Eq.(\ref{eq:VNGFW}) and Eq.(\ref{affmargin}) are Lagrangian duals of each other as seen in Eq.\eqref{eq:dual}. In this light, it is not surprising that NP and VNG algorithms have such similar properties. Moreover, Bach~\cite{B12} recently pointed out the strong connection via duality between subgradient and Frank-Wolfe methods.

However, VNG possesses one additional property. Restating a result of \cite{EF00} -- if $|\rho_A^-| > 0$, then VNG has linear convergence.   We include a simple geometrical proof of this result.

\begin{proposition} Assume (D) is feasible, $A = \left[\begin{array}{ccc}a_1 & \cdots & a_n \end{array} \right] \in \R^{d\times n}$ with $\|a_i\|=1,\; i=1,\dots,d$, and $|\rho_A^-|>0$.  Then the iterates $w_t = A\alpha_t$ generated by the VNG algorithm satisfy
\[
\|w_{t+1}\| \le \|w_t\|\sqrt{1-|\rho_A^-|^2}, \; t=0,1,\dots
\]
In particular, the algorithm finds $w_t=A\alpha_t, \; \alpha_t\in\triangle$ with $\|w_t\|\le \epsilon $ in at most $O\Big( \frac1{|\rho_A^-|^2}\log\left(\frac1{\epsilon}\right) \Big)$ steps. 
\end{proposition}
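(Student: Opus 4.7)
The plan is to combine Proposition~\ref{thm:radthm}---the inscribed-ball characterization of $|\rho_A^-|$---with a short planar-geometry calculation on the VNG line-search step. The key observation is that the atom $a$ chosen by VNG must make a strongly negative inner product with $w_t$, after which the minimum-norm point on the segment joining $w_t$ and $a$ is pinned down by a right-triangle identity. The main (and quite modest) obstacle is recognizing that Proposition~\ref{thm:radthm} is the right tool to convert the fact ``$w_t\in\mathrm{lin}(A)$'' into a useful lower bound on $-w_t^T a/\|w_t\|$; once this is in hand, everything else is an elementary closed-form calculation, and no potential-function or heavy convex-analysis machinery is required.

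For the inner-product bound, I would note that $w_t = A\alpha_t$ lies in $\mathrm{lin}(A)$, so $-w_t/\|w_t\|\in\Circle_A$ whenever $w_t\neq 0$. Proposition~\ref{thm:radthm} then says that $-|\rho_A^-|\,w_t/\|w_t\|$ lies in $\mathrm{conv}(A)$, hence equals $Ap$ for some $p\in\triangle$. Taking inner products with $w_t$ yields
\[
-|\rho_A^-|\,\|w_t\| \;=\; w_t^T A p \;=\; \sum_i p_i\,(w_t^T a_i) \;\ge\; \min_i w_t^T a_i.
\]
Because all $\|a_i\|=1$, the VNG choice $a=\arg\max_i\|w_t-a_i\|$ coincides with $\arg\min_i w_t^T a_i$, so $c:=w_t^T a \le -|\rho_A^-|\,\|w_t\|\le 0$.

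For the line-search step, set $N:=\|w_t\|$. Since $c\le 0$, one checks that the minimizer $\lambda^*=(1-c)/\|w_t-a\|^2$ of $\|\lambda w_t+(1-\lambda)a\|$ on $[0,1]$ lies in $(0,1)$, so $w_{t+1}$ is precisely the foot of the perpendicular from the origin to the line through $w_t$ and $a$. The right triangle with vertices $0,w_t,w_{t+1}$ (right angle at $w_{t+1}$) then gives
\[
\|w_{t+1}\|^2 \;=\; N^2 - \frac{(N^2-c)^2}{N^2-2c+1} \;=\; \frac{N^2 - c^2}{N^2 - 2c+1}.
\]
Differentiating this in $c$ factors as $2(c-N^2)(c-1)/(N^2-2c+1)^2$, which is positive on $c\le 0$; subject to $c\le -|\rho_A^-|\,N$, the expression is therefore maximized at $c=-|\rho_A^-|\,N$, yielding
\[
\|w_{t+1}\|^2 \;\le\; \frac{N^2(1-|\rho_A^-|^2)}{N^2+2|\rho_A^-|\,N+1} \;\le\; N^2(1-|\rho_A^-|^2),
\]
which is the claimed per-step contraction. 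Unrolling from $\|w_0\|=\|a_1\|=1$ gives $\|w_t\|^2\le (1-|\rho_A^-|^2)^t \le e^{-t|\rho_A^-|^2}$, and solving $e^{-t|\rho_A^-|^2/2}\le\epsilon$ produces the stated $O\bigl(|\rho_A^-|^{-2}\log(1/\epsilon)\bigr)$ iteration bound.
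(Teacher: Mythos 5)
Your proof is correct and follows essentially the same route as the paper's: a per-step geometric contraction obtained by bounding the cosine of the angle between $w_t$ and the chosen atom by $-|\rho_A^-|$ and then analyzing the projection onto the segment. The only differences are cosmetic and, if anything, improvements in rigor: you derive the inner-product bound via Proposition~\ref{thm:radthm} rather than directly from the definition of the affine margin, you explicitly verify that the VNG ``furthest point'' is the minimizer of $w_t^Ta_i$ (which the paper leaves implicit), and you replace the paper's figure-based parallel-lines argument with an exact closed-form evaluation of the line search, including the check that the minimizer lies in $(0,1)$.
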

\begin{proof}
Figure~\ref{fig:VNG} illustrates the idea of the proof.  Assume $w_t = A\alpha_t \in \text{lin}(A)\ne 0$ as otherwise there is nothing to show.  By the definition of affine margin, there must exist a point $a_i$ such that $\cos\alpha = \frac{w_t}{\|w_t\|} \cdot a_i \leq \rho_A^- $ or equivalently $|\cos \alpha| \geq |\rho_A^-|$. VNG sets $w_{t+1}$ to be the nearest point to the origin on the  line joining $w_t$ with $a_i$. Consider $\tilde w$ as the nearest point to the origin on a (dotted) line parallel to $a_i$ through $w_t$. Note $(\pi/2 - \beta) + \alpha = \pi$ (internal angles of parallel lines). Then, $\|w_{t+1}\| \leq \|\tilde w\| = \|w_t\|\cos \beta = \|w_t\| \sin \alpha = \|w_t\|\sqrt{1-\cos^2\alpha} \leq \|w_t\|\sqrt{1-|\rho_A^-|^2}$.
\end{proof}
Hence, VNG can converge linearly with strict infeasibility of (P), but NP cannot. Nevertheless, NP and VNG can both be seen geometrically as trying to represent the center of circumscribing or inscribing balls (in (P) or (D)) of conv(A)  as a convex combination of input points. 

\subsection{Summary}

In this paper, we advance and unify our understanding of margins through a slew of new results and connections to old ones. First, we point out the correctness of using the affine margin, deriving its relation to the smallest ball enclosing conv(A), and the largest ball within conv(A). We proved generalizations of Gordan's theorem, whose statements were conjectured using the preceding geometrical intuition. Using these tools, we then derived interesting variants of Hoffman's theorems that explicitly use affine margins. We ended by proving that the Perceptron algorithm turns out to be primal-dual, its iterates are margin-maximizers, and the norm of its iterates are margin-approximators.

Right from his seminal introductory paper in the 1950s, Hoffman-like theorems have been used to prove convergence rates and stability of algorithms. Our theorems and also their proof strategies can be very useful in this regard, since such Hoffman-like theorems can be very challenging to conjecture and prove (see \cite{HL12} for example). Similarly, Gordan's theorem has been used in a wide array of settings in optimization, giving a precedent for the possible usefulness of our generalization. Lastly, large margin classification is now such an integral machine learning topic, that it seems fundamental that we unify our understanding of the geometrical, analytical and algorithmic ideas behind margins.

\subsection*{Acknowledgements}
This research was partially supported by NSF grant CMMI-1534850.


\begin{thebibliography}{10}

\bibitem{B12}
Francis Bach.
\newblock Duality between subgradient and conditional gradient methods.
\newblock {\em arXiv preprint arXiv:1211.6302}, 2012.

\bibitem{B62}
HD~Block.
\newblock The perceptron: A model for brain functioning. i.
\newblock {\em Reviews of Modern Physics}, 34(1):123, 1962.

\bibitem{BL06}
Jonathan Borwein and Adrian Lewis.
\newblock {\em Convex analysis and nonlinear optimization: theory and
  examples}, volume~3.
\newblock Springer, 2006.

\bibitem{CC01}
Dennis Cheung and Felipe Cucker.
\newblock A new condition number for linear programming.
\newblock {\em Mathematical programming}, 91(1):163--174, 2001.

\bibitem{C83}
Vasek Chvatal.
\newblock {\em Linear programming}.
\newblock Macmillan, 1983.

\bibitem{D92}
George Dantzig.
\newblock An $\epsilon$-precise feasible solution to a linear program with a
  convexity constraint in $1/\epsilon^2$ iterations independent of problem
  size.
\newblock Technical report, Stanford University, 1992.

\bibitem{EY36}
Carl Eckart and Gale Young.
\newblock The approximation of one matrix by another of lower rank.
\newblock {\em Psychometrika}, 1(3):211--218, 1936.

\bibitem{EF00}
Marina Epelman and Robert~M Freund.
\newblock Condition number complexity of an elementary algorithm for computing
  a reliable solution of a conic linear system.
\newblock {\em Mathematical Programming}, 88(3):451--485, 2000.

\bibitem{EF97}
Marina~A Epelman, Robert~M Freund, et~al.
\newblock {\em Condition number complexity of an elementary algorithm for
  resolving a conic linear system}.
\newblock Citeseer, 1997.

\bibitem{FV99}
Robert~M Freund and Jorge~R Vera.
\newblock Some characterizations and properties of the “distance to
  ill-posedness and the condition measure of a conic linear system.
\newblock {\em Mathematical Programming}, 86(2):225--260, 1999.

\bibitem{G66}
Elmer~G Gilbert.
\newblock An iterative procedure for computing the minimum of a quadratic form
  on a convex set.
\newblock {\em SIAM Journal on Control}, 4(1):61--80, 1966.

\bibitem{GPS12}
Andrew Gilpin, Javier Pe{\~n}a, and Tuomas Sandholm.
\newblock First-order algorithm with $\mathcal{O}(\rm ln(1 / \epsilon))$
  convergence for $\epsilon$-equilibrium in two-person zero-sum games.
\newblock {\em Mathematical programming}, 133(1-2):279--298, 2012.

\bibitem{G80}
JL~Goffin.
\newblock The relaxation method for solving systems of linear inequalities.
\newblock {\em Mathematics of Operations Research}, pages 388--414, 1980.

\bibitem{GHR95}
Osman G{\"u}ler, Alan~J Hoffman, and Uriel~G Rothblum.
\newblock Approximations to solutions to systems of linear inequalities.
\newblock {\em SIAM Journal on Matrix Analysis and Applications},
  16(2):688--696, 1995.

\bibitem{H52}
Alan~J Hoffman.
\newblock On approximate solutions of systems of linear inequalities.
\newblock {\em Journal of Research of the National Bureau of Standards},
  49(4):263--265, 1952.

\bibitem{HL12}
Mingyi Hong and Zhi-Quan Luo.
\newblock On the linear convergence of the alternating direction method of
  multipliers.
\newblock {\em arXiv preprint arXiv:1208.3922}, 2012.

\bibitem{LT12}
Dan Li and Tam{\'a}s Terlaky.
\newblock The duality between the perceptron algorithm and the von {N}eumann
  algorithm.
\newblock {\em Modeling and {O}ptimization: {T}heory and {A}pplications},
  62:113--136, 2013.

\bibitem{N62}
Albert~BJ Novikoff.
\newblock On convergence proofs for perceptrons.
\newblock Technical report, 1962.

\bibitem{RP14}
Aaditya Ramdas and Javier Pe\~{n}a.
\newblock Margins, kernels and non-linear smoothed perceptrons.
\newblock In {\em Proceedings of the 31st International Conference on Machine
  Learning (ICML)}, 2014.

\bibitem{R94}
James Renegar.
\newblock Some perturbation theory for linear programming.
\newblock {\em Mathematical Programming}, 65(1):73--91, 1994.

\bibitem{R95}
James Renegar.
\newblock Incorporating condition measures into the complexity theory of linear
  programming.
\newblock {\em SIAM Journal on Optimization}, 5(3):506--524, 1995.

\bibitem{R58}
Frank Rosenblatt.
\newblock The perceptron: a probabilistic model for information storage and
  organization in the brain.
\newblock {\em Psychological review}, 65(6):386, 1958.

\bibitem{SP13}
Negar Soheili and Javier Pe{\~n}a.
\newblock A primal--dual smooth perceptron--von {N}eumann algorithm.
\newblock In {\em Discrete Geometry and Optimization}, pages 303--320.
  Springer, 2013.

\bibitem{TY98}
M.~Todd and Y.~Ye.
\newblock Approximate {F}arkas lemmas and stopping rules for iterative
  infeasible-point iterates for linear programming.
\newblock {\em Mathematical Programming}, 81:1--21, 1998.

\bibitem{V98}
Vladimir~N Vapnik.
\newblock Statistical learning theory.
\newblock 1998.

\end{thebibliography}

\newpage
\appendix

\section{Figures}\label{appsec:figures}
\begin{figure}[h!]
\begin{center}
\includegraphics[width = 0.3\linewidth]{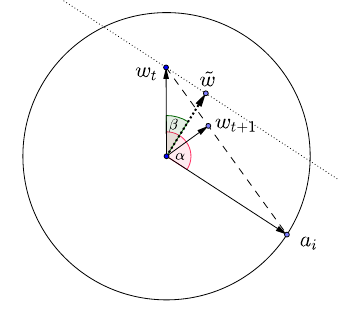}
\caption{Geometric illustration of a VNG iteration.}
\label{fig:VNG}
\end{center}
\end{figure}

\begin{figure}[h]
\begin{center}
\includegraphics[width = 0.75\linewidth]{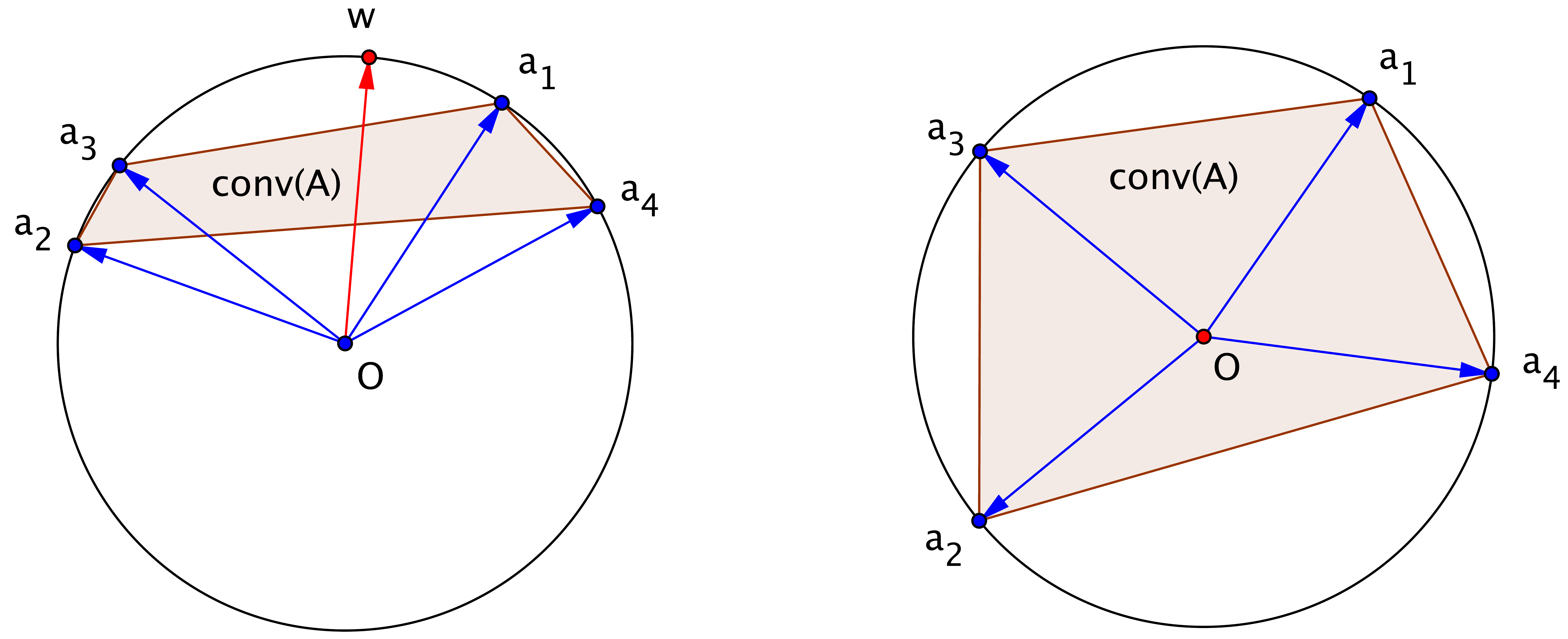}
\caption{Gordan's Theorem: Either there is a $w$ making an acute angle with all points, or the origin is in their convex hull. (note $\|a_i\|=1$)}
\label{fig:Gordan}
\end{center}
\end{figure}

\begin{figure}[h]
\begin{center}
\includegraphics[width = 0.75\linewidth]{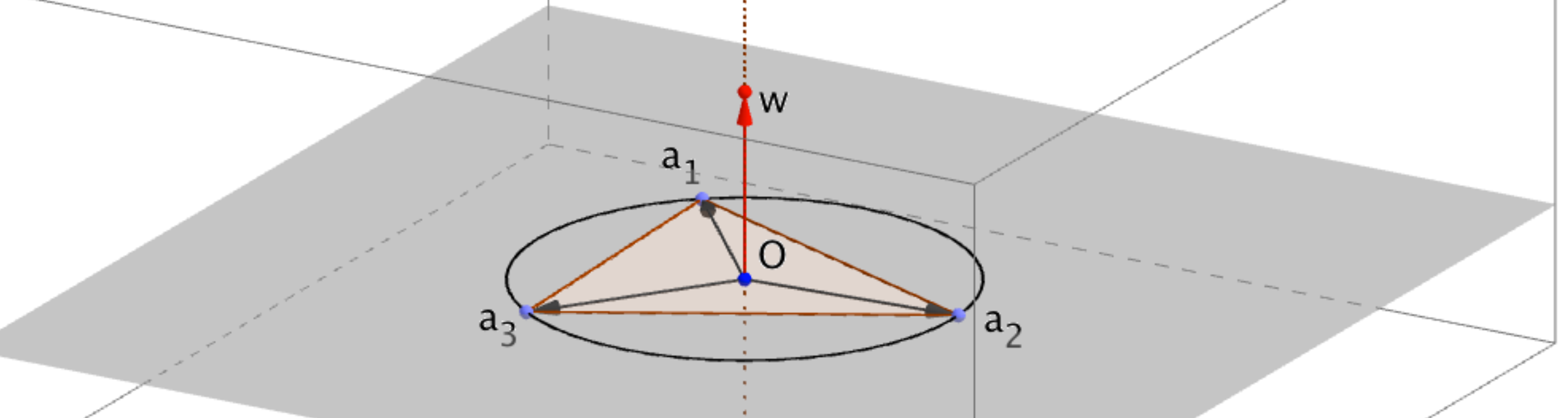}
\caption{When restricted to lin$(A)$, the margin is strictly negative. Otherwise, it would be possible to choose $w$ perpendicular to lin$(A)$, leading to a zero margin.}
\label{fig:affmargin}
\end{center}
\end{figure}

\begin{figure}[h!]
\begin{center}
\includegraphics[width = 0.35\linewidth]{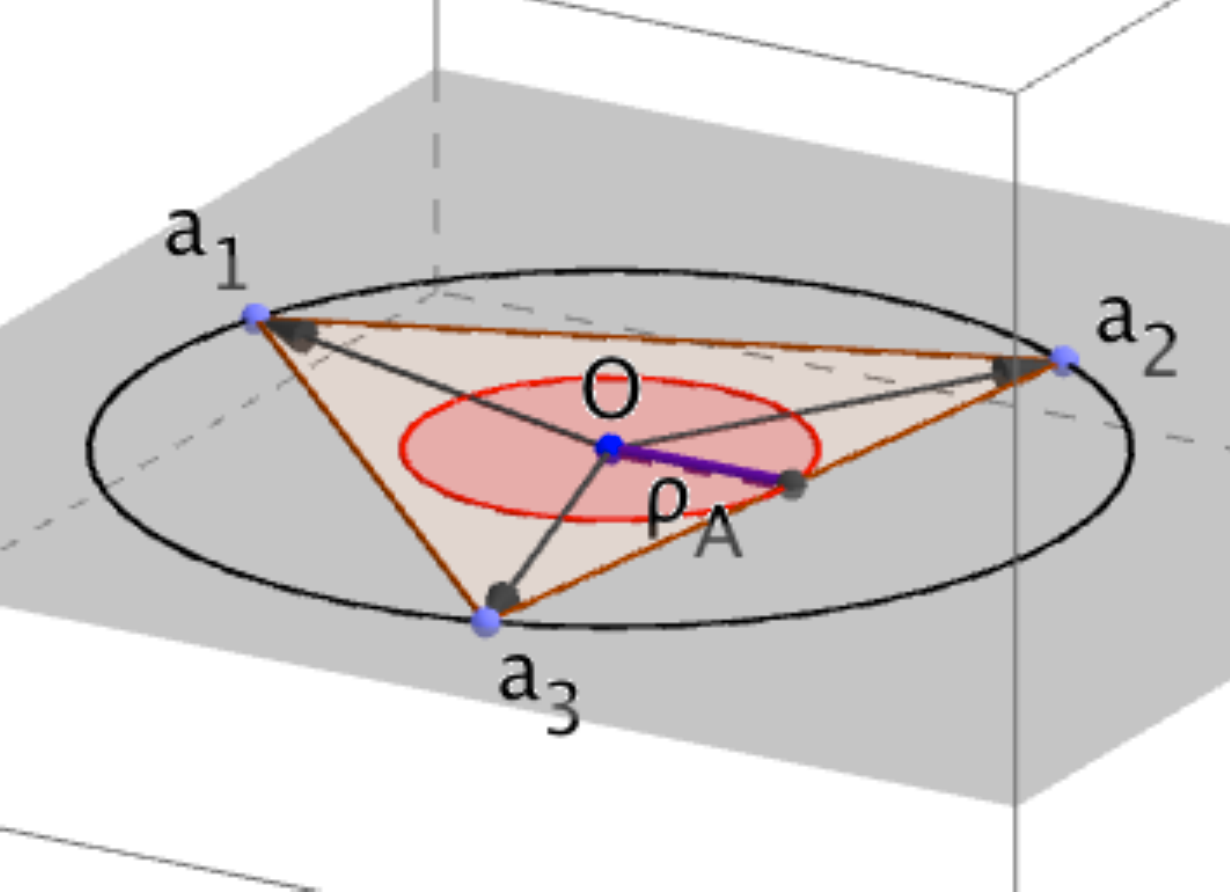} \hspace{0.3in}
\includegraphics[width = 0.35\linewidth]{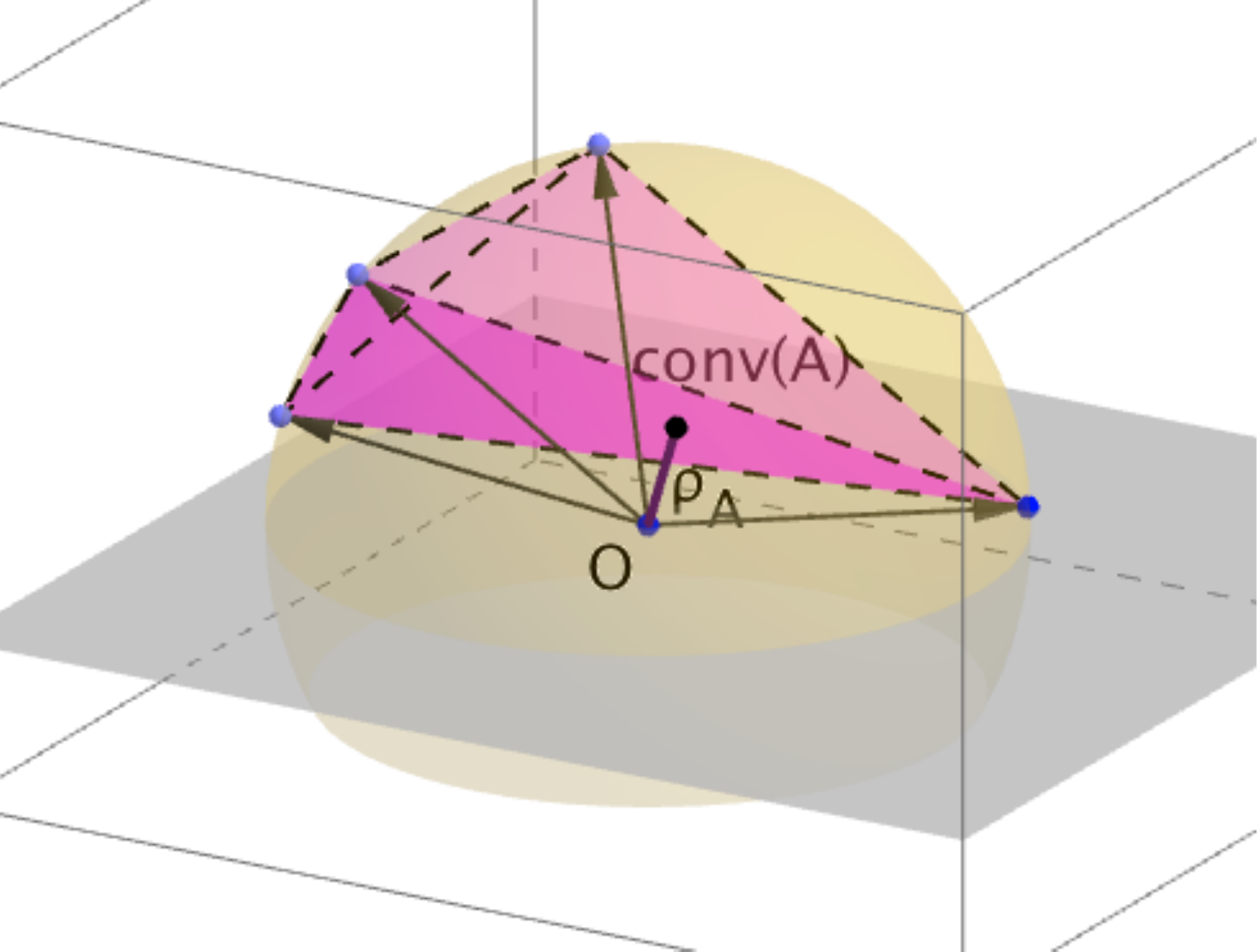}
\caption{Left: $\rho_A^-$ is the radius of the largest ball centered at origin, inside the relative interior of conv$(A)$. Right: $\rho_A^+$ is the distance from origin to conv$(A)$.
}
\label{fig:posnegmargin}
\end{center}
\end{figure}

\end{document}